\newtheorem{thm}{Theorem}[section]
\newtheorem{lem}[thm]{Lemma}
\newtheorem{cor}[thm]{Corollary}
\newtheorem{Def}[thm]{Definition}
\newtheorem{prop}[thm]{Proposition}
\newtheorem{rem}[thm]{Remark}
\newtheorem{ex}[thm]{Example}
\newcommand{\bdfn}{\begin{Def} \rm}
	\newcommand{\edfn}{\end{Def}}
\newcommand{\es}{\emptyset}
\newcommand{\ci}{\subseteq}
\newcommand{\al}{\alpha}
\newcommand{\be}{\beta}
\newcommand{\de}{\delta}
\newcommand{\e}{\varepsilon}
\newcommand{\la}{\lambda}
\newcommand{\ga}{\gamma}
\newcommand{\trcp}{\textrm{r.c.p.}}
\newcommand{\ercp}{\emph{r.c.p.}}
\newcommand{\mb}{\mathbb}
\newcommand{\mc}{\mathcal}
\newcommand{\iy}{\infty}
\newcommand{\msc}{\mathscr}
\newcommand{\beqa}{\begin{eqnarray*}}
	\newcommand{\eeqa}{\end{eqnarray*}}
\newcounter{cnt1}
\newcounter{cnt2}
\newcounter{cnt3}
\newcounter{cnt4}
\newcommand{\blr}{\begin{list}{$($\roman{cnt1}$)$} {\usecounter{cnt1}
			\setlength{\topsep}{0pt} \setlength{\itemsep}{0pt}}}
	\newcommand{\blR}{\begin{list}{\Roman{cnt4}.\ } {\usecounter{cnt4}
				\setlength{\topsep}{0pt} \setlength{\itemsep}{0pt}}}
		\newcommand{\bla}{\begin{list}{$(\alph{cnt2})$} {\usecounter{cnt2}
					\setlength{\topsep}{0pt} \setlength{\itemsep}{0pt}}}
			\newcommand{\bln}{\begin{list}{$($\arabic{cnt3}$)$} {\usecounter{cnt3}
						\setlength{\topsep}{0pt} \setlength{\itemsep}{0pt}}}
				\newcommand{\el}{\end{list}}
			\title{On Property-$(P_{1})$ in Banach Spaces}
			\author[Thomas]{Teena Thomas}
			\address[Teena Thomas]{Department of Mathematics \\
				Indian Institute of Technology Hyderabad \\
				India, \textit{E-mail~:} \textit{ma19resch11003@iith.ac.in}/\textit{tteena.tthomas@gmail.com}}
			\subjclass[2010]{Primary 41A65, 41A50. Secondary 52A07, 46E15.}
			\keywords{Property-$(P_1)$, strong proximinality, restricted Chebyshev center, $L_{1}$-predual, $M$-ideal, $1 \frac{1}{2}$-ball property.}
\begin{document}
				\maketitle
				\begin{abstract}
					We discuss a set-valued generalization of strong proximinality in Banach spaces, introduced by J. Mach [Continuity properties of Chebyshev centers. \textit{J. Approx. Theory}, 29(3):223–230, 1980] as property-$(P_1)$. For a Banach space $X$, a closed convex subset $V$ of $X$ and a subclass $\msc{F}$ of the closed bounded subsets of $X$, this property, defined for the triplet $(X,V,\msc{F})$, describes simultaneous strong proximinality of $V$ at each of the sets in $\msc{F}$. We establish that if the closed unit ball of a closed subspace of a Banach space $X$ possesses property-$(P_1)$ for each of the classes of closed bounded, compact and finite subsets of $X$, then so does the subspace. It is also proved that the closed unit ball of an $M$-ideal in an $L_{1}$-predual space satisfies property-$(P_{1})$ for the compact subsets of the space. For a Choquet simplex~$K$, we provide a sufficient condition for the closed unit ball of a finite co-dimensional closed subspace of $A(K)$ to satisfy property-$(P_{1})$ for the compact subsets of $A(K)$. This condition also helps to establish the equivalence of strong proximinality of the closed unit ball of a finite co-dimensional subspace of $A(K)$ and property-$(P_1)$ of the closed unit ball of the subspace for the compact subsets of $A(K)$. Further, for a compact Hausdorff space~$S$, a characterization is provided for a strongly proximinal finite co-dimensional closed subspace of $C(S)$ in terms of property-$(P_{1})$ of the subspace and that of its closed unit ball for the compact subsets of $C(S)$. We generalize this characterization for a strongly proximinal finite co-dimensional closed subspace of an $L_{1}$-predual space. As a consequence, we prove that such a subspace is a finite intersection of hyperplanes such that the closed unit ball of each of these hyperplanes satisfy property-$(P_1)$ for the compact subsets of the $L_1$-predual space and vice versa. We conclude this article by providing an example of a closed subspace of a non-reflexive Banach space which satisfies $1 \frac{1}{2}$-ball property and does not admit restricted Chebyshev center for a closed bounded subset of the Banach space.
				\end{abstract}
				
				
				\section{Introduction}\label{sec1}
				The concepts of best simultaneous approximation and in particular, proximinality in Banach spaces are of great interest and significance in approximation theory. The classical (restricted) Chebyshev center problem stems from these concepts. With its first appearance in \cite{GV}, the notion of strong proximinality rose to prominence, which is evident through \cite{BLR}, \cite{SD2}, \cite{SD} and \cite{C}--\cite{CT}. This article aims to explore its generalization, introduced as property-$(P_1)$ in \cite{M1}, in certain objects of the class of Banach spaces.
				
				In this article, we consider Banach spaces only over the real field $\mb{R}$ and all the subspaces considered are assumed to be closed. Let $X$ be a Banach space. For $x \in X$ and $r>0$, $B[x,r]$ denotes the closed ball centered at $x$ with radius $r$. In particular, for simplicity, we denote the closed unit ball $B[0,1]$ by $B_{X}$. The dual space of $X$ is denoted by $X^{\ast}$. If $Y$ is a subspace of $X$, then $B_{Y} = B_{X} \cap Y$. For a non-empty closed convex subset $V$ of $X$, let $\mc{CB}(V)$, $\mc{K}(V)$ and $\mc{F}(V)$ denote the classes of all non-empty closed bounded subsets of $V$, non-empty compact subsets of $V$ and non-empty finite subsets of $V$ respectively.
				
				Let $B \in \mc{CB}(X)$ and $V$ be a non-empty closed convex subset of $X$. For each $x \in X$, let $r(x,B) = \sup\{\|x-b\|:b \in B\}$ and for each $\la>0$, let $S_{\la}(B) = \{x \in X: r(x,B) \leq \la\}$. The \textit{restricted Chebyshev radius} of $B$ with respect to(in short, w.r.t.) $V$ in $X$ is denoted by $\emph{rad}_{V}(B)$ and is defined as $\emph{rad}_{V}(B) = \inf_{v \in V} r(v,B)$. A point~$v \in X$ is called a \textit{restricted  Chebyshev center} of $B$ w.r.t. $V$ in $X$ if $v \in S_{\emph{rad}_{V}(B)}(B) \cap V$. We denote the set of all restricted Chebyshev centers of $B$ w.r.t. $V$ in $X$ by $\emph{cent}_{V}(B)$. For $\de>0$, we define $\emph{cent}_{V}(B,\de)= \{v \in V: r(v,B) \leq \emph{rad}_{V}(B)+\de\}$. Let us note here that $\emph{cent}_{V}(B,\de) = S_{\emph{rad}_{V}(B)+\de}(B) \cap V.$ If $V = X$, then $\emph{rad}_{X}(B)$ is called the \textit{Chebyshev radius} of $B$ in $X$ and the elements in $\emph{cent}_{X}(B)$ are called the \textit{Chebyshev centers} of $B$ in $X$. 
				
				\begin{Def}[{\cite{PN}}]
					Let $V$ be a non-empty closed convex subset of $X$ and $\msc{F} \ci \mc{CB}(X)$. Then the pair $(V,\msc{F})$ is said to satisfy the \textit{restricted center property}(in short, \trcp) if for each $F \in \msc{F}$, $\textrm{cent}_{V}(F) \neq \es$.
				\end{Def}
				
				A non-empty closed convex subset~$V$ of $X$ is said to be proximinal in $X$ if for each $x \in X$, $\emph{cent}_{V}(\{x\}) \neq \es$. For each $x \in X$, we denote $\emph{cent}_{V}(\{x\})$ by $P_{V}(x)$ and $\emph{rad}_{V}(\{x\}) = \inf_{v \in V} \|x-v\|$ is the distance of the point~$x$ from $V$, which we denote by $d(x,V)$. We say a subspace~$Y$ of $X$ is ball proximinal in $X$ if $B_{Y}$ is proximinal in $X$.
				
				The following definition is a stronger form of proximinality, which was introduced in \cite{GV}.
				\begin{Def}\label{Def0.0}
					A proximinal subset~$V$ of a Banach space~$X$ is said to be strongly proximinal at $x \in X$ if for each $\e>0$, there exists $\de(\e,x)>0$ such that $P_{V}(x,\de) \ci P_{V}(x)  +\e B_{X}$, where $P_{V}(x,\de) = \textrm{cent}_{V}(\{x\},\de)$. 
					We say that $V$ is strongly proximinal in $X$ if it is strongly proximinal at all points in $X$. 
					
					A subspace~$Y$ of a Banach space~$X$ is said to be strongly ball proximinal in $X$ if $B_Y$ is strongly proximinal in $X$.
				\end{Def} 
				
				The set-valued analogue of strong proximinality was first introduced by J. Mach in \cite{M1} and is defined as follows. 
				
				\begin{Def}\label{Def0}
					Let $X$ be a Banach space, $V$ be a non-empty closed convex subset of $X$ and $\msc{F} \ci \mc{CB}(X)$ such that $(V,\msc{F})$ has \trcp. Then the triplet $(X,V,\msc{F})$ has property-$(P_{1})$ if for each $\e>0$ and $F \in \msc{F}$, there exists $\de(\e,F)>0$ such that $\textrm{cent}_{V}(F,\de) \ci \textrm{cent}_{V}(F) + \e B_{X}.$
				\end{Def}
				
				It is clear from the definition of property-$(P_1)$ that if $V$ is a subspace and $\msc{F}$ is the class of all singleton subsets of $X$, then $V$ is strongly proximinal in $X$ if $(X,V,\msc{F})$ has property-$(P_1)$. Now, with the same notations as in Definition~\ref{Def0}, an equivalent way of saying that the triplet $(X,V,\msc{F})$ satisfies property-$(P_1)$ is if the sequence~$\{v_{n}\} \ci V$ is such that $r(v_{n},F) \rightarrow \emph{rad}_{V}(F)$, then $d(v_{n},\emph{cent}_{V}(F)) \rightarrow 0$. Examples of triplets satisfying property-$(P_{1})$ can be found in \cite{M1}.
				
				It is proved in \cite{BLR} that the notion of strong ball proximinality is stronger than that of strong proximinality for a subspace of a Banach space. In Section~\ref{sec2}, we explore such a connection between the notion of property-$(P_1)$ of a subspace of a Banach space~$X$ and that of its closed unit ball for $\mc{CB}(X)$, $\mc{K}(X)$ and $\mc{F}(X)$. 
				In Sections~\ref{sec3} and $\ref{sec4}$, we mainly investigate property-$(P_1)$ in the class of $L_{1}$-predual spaces. Let us recall some of the basic notions and well-known results in an $L_{1}$-predual space.
				\begin{Def}
					A Banach space X is said to be an $L_{1}$-predual space if $X^{\ast}$ is isometric to an $L_{1}(\mu)$ space, where $(\varOmega,\varSigma,\mu)$ is a positive measure space.
				\end{Def}
				
				J. Lindenstrauss characterized $L_{1}$-predual spaces in terms of the intersection properties of the balls in these spaces in \cite{JL}. 
				\begin{Def}
					Let $X$ be a Banach space and $n \in \mb{N}$. Then $X$ is said to have the $n.2.I.P.$ if for every family of pairwise intersecting balls $\{B[x_{i},r_{i}]:i=1,\ldots,n\}$, $\bigcap_{i=1}^{n} B[x_{i},r_{i}] \neq \es$.
				\end{Def}
				A detailed study on these intersection properties can be found in \cite{JL}. It is proved in \cite[Theorem~6.1]{JL} that $X$ is an $L_{1}$-predual space if and only if for each $n \in \mb{N}$, $X$ has $n.2.I.P.$. The class of spaces of real-valued continuous functions on a compact Hausdorff space~$S$ equipped with supremum norm, denoted by $C(S)$ and that of spaces of real-valued affine continuous functions on a Choquet simplex~$K$ equipped with supremum norm, denoted by $A(K)$, are two major subclasses of the $L_{1}$-predual spaces (see \cite{AE} and \cite{JL}). 
				We refer \cite{AL} and \cite{AE} for a detailed study on Choquet simplex and Choquet theory in general. For a closed convex set~$V$, the set of all extreme points of $V$ is denoted by $ext(V)$. If $\mu$ is a regular Borel measure on a compact Hausdorff space~$S$, then the support of $\mu$ is denoted by $S(\mu)$. 
				
				The differentiability notion, introduced in \cite{FP} as strongly subdifferentiable(in short, SSD) points, characterizes strongly proximinal hyperplanes. In \cite{GV}, it is proved that for a Banach space~$X$ and $x^\ast \in X^{\ast}$, $x^\ast$ is an SSD-point of $X^{\ast}$ if and only if the kernel of $x^\ast$, denoted by $ker(x^\ast)$, is strongly proximinal in $X$. It is also established that if $Y$ is a strongly proximinal finite co-dimensional subspace of a Banach space, then the annihilator of $Y$, denoted by $Y^{\perp}$, is contained in the set of all SSD-points of $X^{\ast}$. If $X$ is an $L_{1}$-predual space, then the converse is also true(see \cite[Proposition~3.20]{CT}).
				
				Let us now recall another notion in a Banach space, which is stronger than proximinality, called as an $M$-ideal. A detailed study of $M$-ideals can be found in \cite{HW}.
				\begin{Def}
					Let $X$ be a Banach space. 
					\begin{enumerate}
						\item A linear projection~$P$ on $X$ is said to be an $L$-projection if $\|x\|= \|Px\| +\|x-Px\|$, for each $x \in X$. 
						\item A subspace~$J$ of a Banach space~$X$ is said to be an $L$-summand in $X$ if it is the range of an $L$-projection.
						\item A subspace~$J$ of a Banach space~$X$ is said to be an $M$-ideal in $X$ if $J^{\perp}$ is an $L$-summand.
					\end{enumerate}
				\end{Def}
				
				Another subclass of the $L_{1}$-predual spaces is the class of $M$-ideals in an $L_{1}$-predual space. It is proved in \cite{M1} that if $J$ is an $M$-ideal in an $L_{1}$-predual space $X$, then $(X,J,\mc{K}(X))$ satisfies property-$(P_{1})$. This motivates us to investigate if the triplet $(X,B_{J},\mc{K}(X))$ also satisfies property-$(P_1)$ or not. The answer is in the affirmative and is proved in Section~\ref{sec3}.
				For a Choquet simplex~$K$, if $\mu \in A(K)^{\ast}$, then it means $\mu \in C(K)^{\ast}$ is a restriction map on $A(K)$. Now, if $Y$ is a finite co-dimensional subspace of $A(K)$, then we prove in Theorem~3.7 that $(A(K),B_Y,\mc{K}(A(K)))$ satisfies property-$(P_1)$ by imposing the conditions that the support of each of the defining measures of the subspace is finite and is contained in $ext(K)$. As a consequence, in particular, the condition that the support of each of the defining measures of the subspace is contained in $ext(K)$ also establishes that the closed unit ball $B_Y$ being strongly proximinal in $A(K)$ is equivalent to the triplet $(A(K),B_Y,\mc{K}(A(K)))$ satisfying property-$(P_1)$. Further, in Sections~\ref{sec3}, we also prove that for a compact Hausdorff space~$S$, $Y$ is a strongly proximinal finite co-dimensional subspace of $C(S)$ if and only if the triplet $(C(S), B_{Y},\mc{K}(C(S)))$ satisfies property-$(P_1)$. The equivalence of the triplets $(C(S),Y,\mc{K}(C(S)))$ and $(C(S), B_{Y},\mc{K}(C(S)))$ satisfying property-$(P_1)$ is also established. These results are then generalized in Section~\ref{sec4} for a strongly proximinal finite co-dimensional subspace of an $L_{1}$-predual space, thereby adding two more characterizations to the list in \cite[Theorem~2.6]{C}.
				
				We now recall the notion of $1 \frac{1}{2}$-ball property, which was first introduced in \cite{Y}. 
				\begin{Def}
					A subspace~$Y$ of a Banach space~$X$ is said to have $1 \frac{1}{2}$-ball property in $X$ if for each $y \in Y$, $x \in X$ and $r_1,r_2>0$, if $\|x-y\| < r_1 + r_2$ and $Y \cap B[x,r_2] \neq \es$, then $Y \cap B[y,r_1] \cap B[x,r_2] \neq \es$.
				\end{Def} 
				It is proved in \cite[Proposition~3.3]{SD2} that if $Y$ satisfies $1 \frac{1}{2}$-ball property in a Banach space $X$, then $Y$ is strongly proximinal in $X$. In Section~\ref{sec5}, we provide an example of a hyperplane in a non-reflexive Banach space $X$ which satisfies $1 \frac{1}{2}$-ball property and does not satisfy \ercp~for $\mc{F}(X)$.
				
				\section{Property-$(P_{1})$ of a Banach space in relation to that of its closed unit ball}\label{sec2}
				In this section, for a subspace $Y$ of a Banach space $X$, we prove that if $\msc{F} =\mc{CB}(X)$, $\mc{K}(X)$ or $\mc{F}(X)$ such that $(X,B_{Y},\msc{F})$ has property-$(P_{1})$ then so does $(X,Y,\msc{F})$. The ideas used are similar to the ones in \cite{BLR}.
				If $\la>0$, then for a non-empty set~$A \ci X$, the set~$\{\la a : a \in A\}$ is denoted by $\la A$. 
				\begin{lem}\label{lem0.1}
					Let $Y$ be a subspace of a Banach space~$X$ and $B \in \mc{CB}(X)$.
					\blr
					\item For each $\la>0$, $\la \textrm{cent}_{B_{Y}}(\frac{1}{\la}B) = \textrm{cent}_{\la B_{Y}}(B)$.
					\item For each $\la \geq \sup_{b \in B} \|b\|  + \textrm{rad}_{Y}(B)$, $\textrm{cent}_{Y}(B) \ci \textrm{cent}_{\la B_{Y}}(B)$.
					\item For each $\la > \sup_{b \in B} \|b\|  + \textrm{rad}_{Y}(B)$, $\textrm{cent}_{Y}(B) = \textrm{cent}_{\la B_{Y}}(B)$.
					\el
				\end{lem}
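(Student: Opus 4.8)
The plan is to derive all three parts from two elementary facts: the scaling behaviour of the radius functional $r(\cdot,B)$ under dilations of $B$, and a triangle-inequality bound showing that a (near-)restricted-center of $B$ in $Y$ cannot have large norm. Throughout I will use that $\la B_Y \ci Y$, so that $\textrm{rad}_{\la B_Y}(B) \geq \textrm{rad}_{Y}(B)$ for every $\la>0$.

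For (i), I would first record the homogeneity identity $r(v,\frac{1}{\la}B) = \frac{1}{\la} r(\la v, B)$ for every $v \in X$, which is immediate from $\|v - \frac{1}{\la}b\| = \frac{1}{\la}\|\la v - b\|$ and the definition of $r$. Taking the infimum over $v \in B_Y$, and noting that $v$ ranges over $B_Y$ exactly when $\la v$ ranges over $\la B_Y$, yields $\textrm{rad}_{B_Y}(\frac{1}{\la}B) = \frac{1}{\la}\,\textrm{rad}_{\la B_Y}(B)$. Combining these two identities, the condition $r(v,\frac{1}{\la}B) \leq \textrm{rad}_{B_Y}(\frac{1}{\la}B)$ with $v \in B_Y$ is equivalent to $r(\la v, B) \leq \textrm{rad}_{\la B_Y}(B)$ with $\la v \in \la B_Y$; that is, $v \in \textrm{cent}_{B_Y}(\frac{1}{\la}B) \Lra \la v \in \textrm{cent}_{\la B_Y}(B)$. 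Multiplying through by $\la$ gives the asserted set equality.

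For (ii), I may assume $\textrm{cent}_{Y}(B) \neq \es$, since otherwise the inclusion is vacuous. Let $v \in \textrm{cent}_{Y}(B)$; as $v \in Y$ and $\textrm{rad}_{Y}(B)$ is an infimum over $Y$, in fact $r(v,B) = \textrm{rad}_{Y}(B)$. Fixing any $b_0 \in B$, the triangle inequality gives $\|v\| \leq \|v-b_0\| + \|b_0\| \leq r(v,B) + \sup_{b \in B}\|b\| = \textrm{rad}_{Y}(B) + \sup_{b\in B}\|b\| \leq \la$, so $v \in \la B_Y$. Then $\textrm{rad}_{\la B_Y}(B) \leq r(v,B) = \textrm{rad}_{Y}(B) \leq \textrm{rad}_{\la B_Y}(B)$ forces $\textrm{rad}_{\la B_Y}(B) = \textrm{rad}_{Y}(B)$, whence $v \in \textrm{cent}_{\la B_Y}(B)$.

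For (iii), part (ii) already supplies $\textrm{cent}_{Y}(B) \ci \textrm{cent}_{\la B_Y}(B)$, so it remains to prove the reverse inclusion, and for this I would first establish $\textrm{rad}_{\la B_Y}(B) = \textrm{rad}_{Y}(B)$ directly, without assuming a center exists. Given small $\eta>0$, choose $y \in Y$ with $r(y,B) < \textrm{rad}_{Y}(B) + \eta$; the same triangle bound as above gives $\|y\| < \textrm{rad}_{Y}(B) + \eta + \sup_{b\in B}\|b\|$, which is at most $\la$ once $\eta < \la - \textrm{rad}_{Y}(B) - \sup_{b\in B}\|b\|$. This last quantity is strictly positive precisely by the strict hypothesis on $\la$, so such $y$ lies in $\la B_Y$ and $\textrm{rad}_{\la B_Y}(B) < \textrm{rad}_{Y}(B) + \eta$; letting $\eta \to 0^{+}$ gives the radius equality. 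Now any $w \in \textrm{cent}_{\la B_Y}(B)$ satisfies $w \in Y$ and $r(w,B) \leq \textrm{rad}_{\la B_Y}(B) = \textrm{rad}_{Y}(B)$, so $w \in \textrm{cent}_{Y}(B)$. The only delicate point is this radius equality: at the boundary value $\la = \sup_{b\in B}\|b\| + \textrm{rad}_{Y}(B)$ the near-centers may be pushed out to norm exactly $\la$, destroying the slack needed for the argument, which is exactly why (iii) requires the strict inequality whereas the one-sided inclusion in (ii) survives at the boundary.
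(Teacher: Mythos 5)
Your proposal is correct and follows essentially the same route as the paper: part (i) via the homogeneity of $r(\cdot,\cdot)$ under dilation, part (ii) via the triangle-inequality bound $\|v\|\leq \sup_{b\in B}\|b\|+r(v,B)$ showing a center of $B$ in $Y$ already lies in $\la B_{Y}$, and part (iii) by observing that near-minimizers in $Y$ (the paper phrases this as $\textrm{rad}_{Y}(B)=\inf\{r(y,B):y\in S_{R+\de}(B)\cap Y\}$ for the specific $\de=\la-(\sup_{b\in B}\|b\|+R)>0$) are forced into $\la B_{Y}$ by the strict inequality on $\la$. Your closing remark about why strictness is needed in (iii) but not in (ii) is a correct reading of the same mechanism the paper exploits.
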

				\begin{proof}
					$(i)$. Let $\la>0$ and $y_{0} \in B_{Y}$. $\la y_{0} \in \la \emph{cent}_{B_{Y}}(\frac{1}{\la}B)$ $\Leftrightarrow$ for each $y \in B_{Y}$, $r(y_{0},\frac{1}{\la}B) \leq r(y,\frac{1}{\la}B)$ $\Leftrightarrow$ for each $y \in B_{Y}$, $r(\la y_{0}, B) \leq r(\la y, B)$ $\Leftrightarrow$ $\la y_{0} \in \emph{cent}_{\la B_{Y}}(B)$.
					
					$(ii)$. Let $\la \geq \sup_{b \in B} \|b\|  + \emph{rad}_{Y}(B)$ and $y_{0} \in \emph{cent}_{Y}(B)$. Then for each $b \in B$, $\|y_{0}\| \leq \|b\| + \|y_{0}-b\| \leq \sup_{b \in B} \|b\| + r(y_{0},B) = \sup_{b \in B} \|b\|  + \emph{rad}_{Y}(B) \leq \la$. Hence, $y_{0} \in \la B_{Y}$ and it follows that $y_{0} \in \emph{cent}_{\la B_{Y}}(B)$.
					
					$(iii)$. Let $\la > \sup_{b \in B} \|b\|  + \emph{rad}_{Y}(B)$ and $y_{0} \in \emph{cent}_{\la B_{Y}}(B)$. Let $R= \emph{rad}_{Y}(B)$. It is easy to see that for each $\de>0$, $R = \inf \{r(y,B): y \in S_{R+\de}(B) \cap Y\}$. In particular, let $\de = \la - (\sup_{b \in B} \|b\|  +R)$. If $y \in S_{R+\de}(B) \cap Y$, then $y \in \la B_{Y}$. Hence, $r(y_{0},B) \leq r(y,B)$. It follows that $y_{0} \in \emph{cent}_{Y}(B)$. 
				\end{proof}
				
				\begin{prop}\label{prop0.1}
					Let $Y$ be a subspace of a Banach space~$X$ and $\msc{F} = \mc{CB}(X)$, $\mc{K}(X)$ or $\mc{F}(X)$. If $(B_{Y},\msc{F})$ has \trcp, then so does $(Y,\msc{F})$.
				\end{prop}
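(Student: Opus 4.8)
The plan is to reduce the statement about $Y$ to the hypothesis about $B_{Y}$ by means of Lemma~\ref{lem0.1}, exploiting that each of the three admissible classes $\mc{CB}(X)$, $\mc{K}(X)$ and $\mc{F}(X)$ is stable under positive scalar multiplication. I would fix an arbitrary $B \in \msc{F}$, so that the goal becomes showing $\textrm{cent}_{Y}(B) \neq \es$.

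First I would check that the threshold appearing in Lemma~\ref{lem0.1} is a genuine (finite) positive number, so that a valid scalar can be selected. Since $B \in \mc{CB}(X)$ is bounded, $\sup_{b \in B}\|b\| < \iy$, and since $0 \in Y$ we have $\textrm{rad}_{Y}(B) = \inf_{y \in Y} r(y,B) \leq r(0,B) = \sup_{b \in B}\|b\| < \iy$. Hence I may fix a scalar $\la > \sup_{b \in B}\|b\| + \textrm{rad}_{Y}(B)$.

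The key observation is then that $\frac{1}{\la}B$ again lies in $\msc{F}$: multiplication by the positive constant $\frac{1}{\la}$ sends closed bounded sets to closed bounded sets, compact sets to compact sets, and finite sets to finite sets, so $\frac{1}{\la}B$ belongs to whichever of the three classes $B$ does. Therefore the hypothesis that $(B_{Y},\msc{F})$ has \trcp~applies to $\frac{1}{\la}B$, giving $\textrm{cent}_{B_{Y}}(\frac{1}{\la}B) \neq \es$. By Lemma~\ref{lem0.1}$(i)$ this yields $\textrm{cent}_{\la B_{Y}}(B) = \la\,\textrm{cent}_{B_{Y}}(\frac{1}{\la}B) \neq \es$, and since $\la$ was chosen strictly above the threshold, Lemma~\ref{lem0.1}$(iii)$ gives $\textrm{cent}_{Y}(B) = \textrm{cent}_{\la B_{Y}}(B)$. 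Combining these, $\textrm{cent}_{Y}(B) \neq \es$, and as $B \in \msc{F}$ was arbitrary, $(Y,\msc{F})$ has \trcp.

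I expect no serious obstacle here: the argument is essentially a direct application of the scaling identities recorded in Lemma~\ref{lem0.1}. The only points requiring (routine) care are the finiteness of $\textrm{rad}_{Y}(B)$, which secures a legitimate choice of $\la$, and the verification that the chosen class $\msc{F}$ is closed under multiplication by $\frac{1}{\la}$, so that the hypothesis on $B_{Y}$ can in fact be invoked for the rescaled set $\frac{1}{\la}B$.
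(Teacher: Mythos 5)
Your argument is correct and is essentially the paper's proof: both rescale by a $\la$ exceeding $\sup_{b\in B}\|b\|+\textrm{rad}_{Y}(B)$, transfer the \trcp~hypothesis from $B_{Y}$ to $\la B_{Y}$ via the scaling identity of Lemma~\ref{lem0.1}$(i)$, and conclude with Lemma~\ref{lem0.1}$(iii)$. The only cosmetic difference is that you apply the hypothesis directly to $\frac{1}{\la}B\in\msc{F}$, whereas the paper phrases the same scaling step as passing from $(B_{Y},\mc{CB}(B_{X}))$ to $(\la B_{Y},\mc{CB}(\la B_{X}))$.
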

				\begin{proof}
					We prove the result only for $\mc{CB}(X)$ because the same proof works for $\mc{K}(X)$ and $\mc{F}(X)$. Let $B \in \mc{CB}(X)$ and $\la> \sup_{b \in B} \|b\| + \emph{rad}_{Y}(B)$. Since $(B_{Y},\mc{CB}(X))$ has \ercp, $(B_{Y},\mc{CB}(B_X))$ has \ercp. Therefore, $(\la B_{Y},\mc{CB}(\la B_{X}))$ has \ercp. Clearly, for each $b \in B$, $b \in \la B_{X}$. Therefore, from Lemma~\ref{lem0.1} $(iii)$, $\emph{cent}_{Y}(B) = \emph{cent}_{\la B_{Y}}(B) \neq \es$. 
				\end{proof}
				
				\begin{prop}\label{prop0.2}
					Let $Y$ be a subspace of a Banach space~$X$ and $B \in \mc{CB}(X)$. Then
					\blr 
					\item For each $\la>0$ and $\de>0$, $\textrm{cent}_{\la B_{Y}}(B,\de) = \la \textrm{cent}_{B_{Y}}(\frac{1}{\la}B, \frac{\de}{\la})$.
					\item For each $\la>0$, $(X, \la B_{Y}, \{B\})$ has property-$(P_1)$ if and only if $(X, B_{Y}, \{\frac{1}{\la} B\})$ has property-$(P_{1})$.
					\item Let $\msc{F}  =\mc{CB}(X)$, $\mc{K}(X)$ or $\mc{F}(X)$. If $(X,B_{Y},\msc{F})$ has property-$(P_1)$, then so does $(X,Y,\msc{F})$.
					\el
				\end{prop}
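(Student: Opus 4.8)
The plan is to prove the three parts in the order stated, since $(ii)$ rests on $(i)$ together with Lemma~\ref{lem0.1}$(i)$, and $(iii)$ rests on $(ii)$ and Lemma~\ref{lem0.1}. For $(i)$ I would first record the homogeneity of the radius functional: factoring $\la$ out of the defining supremum gives $r(\la y,B)=\la\, r(y,\frac{1}{\la}B)$ for every $y \in X$, and taking the infimum over $y \in B_Y$ yields $\textrm{rad}_{\la B_Y}(B)=\la\, \textrm{rad}_{B_Y}(\frac{1}{\la}B)$. Writing a generic element of $\la B_Y$ as $v=\la y$ with $y \in B_Y$, the condition $r(\la y,B)\le \textrm{rad}_{\la B_Y}(B)+\de$ becomes, after dividing by $\la$, exactly $r(y,\frac{1}{\la}B)\le \textrm{rad}_{B_Y}(\frac{1}{\la}B)+\frac{\de}{\la}$, i.e. $y \in \textrm{cent}_{B_Y}(\frac{1}{\la}B,\frac{\de}{\la})$, which gives the claimed set equality.

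For $(ii)$ I would combine $(i)$ with Lemma~\ref{lem0.1}$(i)$, which supplies $\textrm{cent}_{\la B_Y}(B)=\la\, \textrm{cent}_{B_Y}(\frac{1}{\la}B)$. Property-$(P_1)$ for $(X,\la B_Y,\{B\})$ asserts that for each $\e>0$ there is $\de>0$ with $\textrm{cent}_{\la B_Y}(B,\de)\ci \textrm{cent}_{\la B_Y}(B)+\e B_X$. Substituting the two scaling identities and applying the homothety $x\mapsto \frac{1}{\la}x$ (a bijection of $X$ carrying $\e B_X$ to $\frac{\e}{\la}B_X$) turns this inclusion into $\textrm{cent}_{B_Y}(\frac{1}{\la}B,\frac{\de}{\la})\ci \textrm{cent}_{B_Y}(\frac{1}{\la}B)+\frac{\e}{\la}B_X$. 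Reading off the correspondences $\e \leftrightarrow \la\e'$ and $\de\leftrightarrow \la\de'$ in both directions then gives the asserted equivalence.

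For $(iii)$, fix $B \in \msc{F}$ and $\e>0$; the goal is a $\de>0$ with $\textrm{cent}_Y(B,\de)\ci \textrm{cent}_Y(B)+\e B_X$. Since the definition of property-$(P_1)$ presupposes \trcp, Proposition~\ref{prop0.1} guarantees $\textrm{cent}_Y(B)\neq\es$. Choose $\la>\sup_{b\in B}\|b\|+\textrm{rad}_Y(B)$; then Lemma~\ref{lem0.1}$(iii)$ gives $\textrm{cent}_Y(B)=\textrm{cent}_{\la B_Y}(B)$ and, comparing radii, $\textrm{rad}_{\la B_Y}(B)=\textrm{rad}_Y(B)=:R$. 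The crucial localization step is that the approximate centers computed over $Y$ and over $\la B_Y$ agree for small tolerance: if $y \in Y$ satisfies $r(y,B)\le R+\de$ then $\|y\|\le \sup_{b\in B}\|b\|+r(y,B)\le \sup_{b\in B}\|b\|+R+\de$, so for $\de\le \de_0:=\la-(\sup_{b\in B}\|b\|+R)$ we get $y\in \la B_Y$; as $\la B_Y\ci Y$ the opposite inclusion is automatic, whence $\textrm{cent}_Y(B,\de)=\textrm{cent}_{\la B_Y}(B,\de)$ for $0<\de\le\de_0$. Because each of $\mc{CB}(X)$, $\mc{K}(X)$, $\mc{F}(X)$ is stable under scaling, $\frac{1}{\la}B\in\msc{F}$, so $(X,B_Y,\{\frac{1}{\la}B\})$ inherits property-$(P_1)$ from $(X,B_Y,\msc{F})$, and by $(ii)$ so does $(X,\la B_Y,\{B\})$; this yields $\de_1>0$ with $\textrm{cent}_{\la B_Y}(B,\de_1)\ci \textrm{cent}_{\la B_Y}(B)+\e B_X$. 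Taking $\de=\min\{\de_0,\de_1\}$ and using that $\textrm{cent}_{\la B_Y}(B,\cdot)$ is nondecreasing, I would chain $\textrm{cent}_Y(B,\de)=\textrm{cent}_{\la B_Y}(B,\de)\ci \textrm{cent}_{\la B_Y}(B,\de_1)\ci \textrm{cent}_{\la B_Y}(B)+\e B_X=\textrm{cent}_Y(B)+\e B_X$, as required.

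The step I expect to be the main obstacle is the localization in $(iii)$: transferring property-$(P_1)$ from the bounded set $\la B_Y$ back to the unbounded subspace $Y$ hinges on showing that the two families of approximate centers coincide for $\de\le\de_0$, which is precisely what makes the bounded-ball hypothesis usable. The remaining content is bookkeeping with the homothety $x\mapsto\frac{1}{\la}x$ and the radius identities from Lemma~\ref{lem0.1}.
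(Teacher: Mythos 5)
Your proposal is correct and follows essentially the same route as the paper: part $(i)$ by the scaling/homogeneity argument of Lemma~\ref{lem0.1}$(i)$, part $(ii)$ as a direct consequence, and part $(iii)$ by choosing $\la>\sup_{b\in B}\|b\|+\textrm{rad}_{Y}(B)$, transferring property-$(P_1)$ to $(X,\la B_{Y},\{B\})$ via $(ii)$, and observing that $\textrm{cent}_{Y}(B,\de)=\textrm{cent}_{\la B_{Y}}(B,\de)$ for small $\de$ together with $\textrm{cent}_{Y}(B)=\textrm{cent}_{\la B_{Y}}(B)$. Your write-up merely makes explicit some bookkeeping (the radius identity $\textrm{rad}_{\la B_Y}(B)=\textrm{rad}_Y(B)$ and the choice $\de=\min\{\de_0,\de_1\}$) that the paper leaves implicit.
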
	
				\begin{proof}
					$(i)$ follows from a similar argument as in Lemma~\ref{lem0.1} $(i)$.
					
					$(ii)$ easily follows from $(i)$.
					
					$(iii)$. We prove the result only for $\mc{CB}(X)$ because the same proof works for $\mc{K}(X)$ and $\mc{F}(X)$. Assume $(X,B_{Y},\mc{CB}(X))$ has property-$(P_1)$. Obviously, $(X,B_{Y},\mc{CB}(B_X))$ has property-$(P_1)$ and from Proposition~\ref{prop0.1}, it follows that $(Y,\mc{CB}(X))$ has \ercp. Let $B \in \mc{CB}(X)$ and $\la > \sup_{b \in B} \|b\|  + \emph{rad}_{Y}(B)$. Therefore, $(X,B_{Y},\{\frac{1}{\la} B\})$ has property-$(P_1)$ and hence, from $(ii)$, $(X,\la B_{Y}, \{B\})$ has property-$(P_1)$. Now, using the same argument as in Lemma~\ref{lem0.1} $(iii)$, for $0<\de< \la -(\sup_{b \in B} \|b\|  + \emph{rad}_{Y}(B))$, $\emph{cent}_{Y}(B,\de) \ci \la B_{Y}$ and hence, $\emph{cent}_{Y}(B,\de) = \emph{cent}_{\la B_{Y}}(B,\de)$. Also, $\emph{cent}_{Y}(B) = \emph{cent}_{\la B_Y}(B)$. It follows that $(X,Y,\{B\})$ has property-$(P_{1})$. Therefore, $(X,Y,\mc{CB}(X))$ has property-$(P_1)$.
				\end{proof}	 
				
				\section{Property-$(P_1)$ in some $L_{1}$-predual spaces}\label{sec3}
				In this section, we study property-$(P_1)$ in few important subclasses of the class of $L_{1}$-predual spaces.
				
				We first aim to show that if $J$ is an $M$-ideal in an $L_{1}$-predual space $X$, then the triplet $(X,B_{J},\mc{K}(X))$ satisfies property-$(P_{1})$. The following lemma is obtained by minor modifications to the proof of \cite[Lemma~2.1]{M2}.
				
				\begin{lem}\label{lem0.0}
					Let $X$ be an $L_{1}$-predual space, $J$ be an $M$-ideal in $X$, $F \in \mc{K}(X)$, $\{x_{1},\ldots,x_{n}\} \ci X$ and $r,r_{1},\ldots,r_{n}>0$. If for each $x \in F$, $B[x,r] \cap J \neq \es$; for each $i=1,\dots,n$, $B[x_{i},r_{i}]\cap J \neq \es$ and $\bigcap_{i=1}^{n} B[x_{i},r_{i}] \cap S_{r}(F) \neq \es$, then $\bigcap_{i=1}^{n} B[x_{i},r_{i}] \cap S_{r}(F) \cap J \neq \es.$
				\end{lem}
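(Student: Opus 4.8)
The plan is to exploit two structural features of the data. First, since $X$ is an $L_1$-predual it has the $n.2.I.P.$ for every $n$, so finitely many pairwise intersecting closed balls always have a common point. Second, since $J$ is an $M$-ideal it carries the restricted intersection (or $n$-ball) property: if finitely many balls each meet $J$ and have a common point in $X$, then after an arbitrarily small enlargement of their radii they admit a common point lying in $J$. I will also use that $J^{\perp}$ is an $L$-summand in $X^{\ast}$, so that $J^{\perp\perp}$ is an $M$-summand of $X^{\ast\ast}$; writing $P$ for the associated $M$-projection, one has $\|\xi\| = \max\{\|P\xi\|,\|\xi-P\xi\|\}$ for each $\xi \in X^{\ast\ast}$ and $\|P\|=\|I-P\|=1$, together with the proximinality (indeed strong proximinality) of the $M$-ideal $J$. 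The goal is to produce a point of $J$ in $\bigcap_{i=1}^{n} B[x_i,r_i] \cap S_r(F)$, where $S_r(F) = \bigcap_{f \in F} B[f,r]$.

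First I would dispose of the infinite family of constraints coming from $F$ by compactness. Fixing $\eta>0$ and choosing a finite $\eta$-net $f_1,\ldots,f_m$ of $F$, one has $S_r(F) \ci \bigcap_{j=1}^{m} B[f_j,r] \ci S_{r+\eta}(F)$, and each $B[f_j,r]$ meets $J$ because $f_j \in F$. Thus the finite family $\{B[x_i,r_i]\}_{i=1}^{n} \cup \{B[f_j,r]\}_{j=1}^{m}$ consists of balls each meeting $J$, and it has nonempty total intersection, since the hypothesis furnishes a point $c \in \bigcap_i B[x_i,r_i] \cap S_r(F)$ lying in every member. Applying the $n$-ball property of $J$ to this finite family yields, for each $\eta$, a point $z_\eta \in J$ with $\|z_\eta-x_i\|\le r_i+\eta$ for all $i$ and $\|z_\eta-f_j\|\le r+\eta$ for all $j$; the triangle inequality against the net then gives $r(z_\eta,F) \le r+2\eta$. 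So for every $\eta>0$ there is a point of $J$ satisfying all the constraints up to an error $O(\eta)$.

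The place where both hypotheses are genuinely used is the passage to the bidual, where an \emph{exact} solution appears one level up. The image $Pc \in J^{\perp\perp}$ already satisfies every constraint: since $\|P\|\le 1$ we have $\|Pc-Px_i\| \le \|c-x_i\| \le r_i$, while the hypothesis $B[x_i,r_i]\cap J \neq \es$ supplies $y_i \in J$ with $(I-P)y_i=0$, so that $\|(I-P)x_i\| = \|(I-P)(x_i-y_i)\| \le r_i$; because $Pc-Px_i$ and $(I-P)x_i$ lie in complementary $M$-summands, $\|Pc-x_i\| = \max\{\|Pc-Px_i\|,\|(I-P)x_i\|\} \le r_i$, and likewise $\|Pc-f\|\le r$ for every $f \in F$. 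Hence $Pc$ lies in $\bigcap_i B_{X^{\ast\ast}}[x_i,r_i] \cap \bigcap_{f \in F} B_{X^{\ast\ast}}[f,r] \cap J^{\perp\perp}$, an exact solution in $J^{\perp\perp}$.

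The main obstacle is therefore to descend from this exact solution in $J^{\perp\perp}$ to an exact solution in $J$, equivalently to remove the $\eta$ from the approximants $z_\eta$. This cannot be done by a naive weak limit, since balls in a non-reflexive $L_1$-predual need not be weak$^\ast$-compact, and it is precisely here that the compactness of $F$ is indispensable. The plan is to split off the $M$-summand and work inside $J^{\ast\ast} = (J^{\ast})^{\ast} \cong L_\infty$, whose balls \emph{are} weak$^\ast$-compact: the constraints reduce exactly to intersecting the weak$^\ast$-compact balls $B_{J^{\ast\ast}}[Px_i,r_i]$ and $B_{J^{\ast\ast}}[Pf,r]$, the latter indexed by the compact set $\{Pf : f \in F\}$, with the weak$^\ast$-dense subspace $J$. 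One then combines the weak$^\ast$-compactness of this intersection with the strong proximinality of the $M$-ideal and a diagonal argument over the nets and the vanishing enlargements to pin down a genuine element of $J$ in the exact intersection. Verifying that the resulting limit lands in $J$ rather than merely in its weak$^\ast$-closure is the delicate step, and it is exactly at this point that the argument of \cite[Lemma~2.1]{M2} is adapted.
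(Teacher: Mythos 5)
The paper itself does not write out a proof of this lemma; it simply states that the result follows by minor modifications of the proof of \cite[Lemma~2.1]{M2}, so your attempt has to be judged on its own. Your first three steps are sound: the finite-net reduction combined with the restricted ($n$-ball) intersection property of $M$-ideals correctly produces, for every $\eta>0$, a point $z_{\eta}\in J$ with $\|z_{\eta}-x_{i}\|\leq r_{i}+\eta$ and $r(z_{\eta},F)\leq r+2\eta$; and the computation with the $M$-projection $P$ onto $J^{\perp\perp}$ (using $\|(I-P)x_{i}\|=\|(I-P)(x_{i}-y_{i})\|\leq r_{i}$ and the $\max$-formula for the norm) correctly shows that $Pc$ is an exact solution in $J^{\perp\perp}$.

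The genuine gap is the last step, which is the entire content of the lemma: passing from the $\eta$-approximate solutions in $J$ (or the exact solution in $J^{\perp\perp}$) to an exact solution in $J$. As you construct them, the points $z_{\eta}$ for different $\eta$ have no relation to one another, so no norm limit is available, and a weak$^{\ast}$ (or weak) cluster point of the family only lands in $J^{\perp\perp}$ --- you acknowledge this and then appeal to ``strong proximinality plus a diagonal argument'' and finally to ``adapting \cite[Lemma~2.1]{M2}'', which is precisely the step that needs proving. Strong proximinality of $J$ in $X$ approximates points of $X$ by points of $J$; it does not by itself place a point of $J$ inside an intersection of closed balls. The missing idea is an iterative Cauchy-sequence construction: having found $z_{k}\in J$ with $r(z_{k},F)\leq r+\e_{k}$ and $\|z_{k}-x_{i}\|\leq r_{i}+\e_{k}$, one adjoins the small ball $B[z_{k},\e_{k}-\e_{k+1}]$ to the next finite family; the $n.2.I.P.$ of the $L_{1}$-predual guarantees that the enlarged family of pairwise intersecting balls still has a common point, each ball still meets $J$, and the $M$-ideal intersection property then yields $z_{k+1}\in J$ with $\|z_{k+1}-z_{k}\|$ controlled by $\e_{k}$. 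With $\sum\e_{k}<\iy$ the sequence is norm-Cauchy, and its limit lies in the norm-closed set $J$ and in every closed ball $B[x_{i},r_{i}]$ and $B[f,r]$, $f\in F$. Note that this is also the one place where the hypothesis that $X$ has the $n.2.I.P.$ is genuinely used to recenter the construction; in your write-up that hypothesis never actually enters. Without this (or an equivalent device) the argument stops one level up, in $J^{\perp\perp}$.
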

				
				\begin{thm}\label{thm0}
					Let $X$ be an $L_{1}$-predual space and $J$ be an $M$-ideal in $X$. Then $(X, B_{J}, \mc{K}(X))$ has property-$(P_{1})$.
				\end{thm}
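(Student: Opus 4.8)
The plan is to fix $F\in\mc{K}(X)$ and $\e>0$, write $R=\textrm{rad}_{B_{J}}(F)$, and for a prescribed $v\in\textrm{cent}_{B_{J}}(F,\de)$ manufacture a genuine centre $w\in\textrm{cent}_{B_{J}}(F)$ with $\|v-w\|\le\e$. The first observation is that $d(f,J)\le R$ for every $f\in F$: for any $u\in B_{J}$ and $f\in F$ one has $d(f,J)\le\|u-f\|\le r(u,F)$, and taking the infimum over $u\in B_{J}$ gives $d(f,J)\le R$; since an $M$-ideal is proximinal, this yields $B[f,R]\cap J\ne\es$. Now a point $w$ lies in $\textrm{cent}_{B_{J}}(F)$ with $\|v-w\|\le\e$ precisely when $w\in B[0,1]\cap B[v,\e]\cap S_{R}(F)\cap J$. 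As $0\in B[0,1]\cap J$ and $v\in B[v,\e]\cap J$, Lemma~\ref{lem0.0} (applied with the balls $B[0,1],B[v,\e]$ and the radius $R$) reduces the theorem to the following $X$-level assertion: for a suitable $\de=\de(\e,F)$, every $v\in B_{X}$ with $r(v,F)\le R+\de$ admits some $z\in B[0,1]\cap B[v,\e]\cap S_{R}(F)$. Taking $B[v,\e]=X$ in this assertion likewise produces the restricted centre needed for the \trcp{} of $(B_{J},\mc{K}(X))$ that the definition of property-$(P_{1})$ presupposes.

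The $X$-level assertion is a set-valued strong proximinality statement for the unit ball $B_{X}$, and it splits according to whether there is slack below $R$. If $\textrm{rad}_{B_{X}}(F)<R$, I would fix a restricted centre $z_{0}\in\textrm{cent}_{B_{X}}(F)$ (compact subsets of an $L_{1}$-predual admit restricted centres with respect to $B_{X}$) so that $\|z_{0}\|\le1$ and $r(z_{0},F)=\textrm{rad}_{B_{X}}(F)<R$, and travel along the segment $z_{t}=(1-t)v+tz_{0}\in B_{X}$. Convexity of $z\mapsto r(z,F)$ gives $r(z_{t},F)\le(1-t)(R+\de)+t\,\textrm{rad}_{B_{X}}(F)$, which is $\le R$ as soon as $t\ge\de/(R+\de-\textrm{rad}_{B_{X}}(F))$, while $\|z_{t}-v\|\le 2t$; hence choosing $\de$ small relative to $\e(R-\textrm{rad}_{B_{X}}(F))$ produces the desired $z=z_{t}$.

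The genuine obstacle is the degenerate case $\textrm{rad}_{B_{X}}(F)=R$, which for $J=X$ is the entire content of the theorem and asserts exactly that $(X,B_{X},\mc{K}(X))$ has property-$(P_{1})$; here convexity leaves no room and one must exploit the intersection structure of the $L_{1}$-predual. Fixing $\de\le\e$ and a restricted centre $z_{0}\in\textrm{cent}_{B_{X}}(F)$ (so $\|z_{0}\|\le1$ and $r(z_{0},F)=R$), the estimates $\|z_{0}-f\|\le R$ and $\|z_{0}\|\le1$ force $\|f-f'\|\le2R$ and $\|f\|\le1+R$, while $\|v-f\|\le R+\de\le R+\e$, so the family $\{B[0,1],B[v,\e]\}\cup\{B[f,R]:f\in F\}$ is pairwise intersecting; by the $n.2.I.P.$ every finite subfamily has a common point, so the decreasing net of nonempty closed bounded convex sets $C_{G}=B[0,1]\cap B[v,\e]\cap\bigcap_{f\in G}B[f,R]$, indexed by the finite $G\ci F$, has the finite intersection property. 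Turning this finite intersection property, together with the compactness of $F$, into a single point of $\bigcap_{f\in F}B[f,R]\cap B[0,1]\cap B[v,\e]$ at the \emph{exact} radius $R$ is the delicate step; I would run it in the manner of \cite[Lemma~2.1]{M2} (the source of Lemma~\ref{lem0.0}), passing through the weak$^{\ast}$-compact bidual balls to locate a candidate and then descending to $X$ via the defining $L_{1}$-structure of $X^{\ast}$. Once the $X$-level point $z$ is in hand in either case, Lemma~\ref{lem0.0} upgrades it to the required $w\in\textrm{cent}_{B_{J}}(F)$ with $\|v-w\|\le\e$, and letting $v$ range over $\textrm{cent}_{B_{J}}(F,\de)$ yields property-$(P_{1})$.
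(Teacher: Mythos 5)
Your overall architecture is the same as the paper's: reduce the problem to producing a point of $B_{X}\cap B[v,\e]\cap S_{R}(F)$ at the exact radius $R=\textrm{rad}_{B_{J}}(F)$, check that every ball in sight meets $J$, and then invoke Lemma~\ref{lem0.0} to push that point into $J$, which lands it in $\textrm{cent}_{B_{J}}(F)$ within $\e$ of $v$. That reduction, and your verification that the family $\{B[f,R]:f\in F\}\cup\{B[0,1],B[v,\e]\}$ is pairwise intersecting, match the paper (which simply takes $\de=\e$ and needs no case split; your Case~1 convexity detour is harmless but unnecessary, and note that even there your appeal to $\textrm{cent}_{B_{X}}(F)\neq\es$ already rests on the same intersection theorem you are trying to avoid).

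The genuine gap is exactly the step you flag as ``delicate'' and do not carry out: passing from pairwise (equivalently, by the $n.2.I.P.$, finite) intersection of the family to a common point in $X$ at the \emph{exact} radii. Weak$^{\ast}$-compactness in $X^{\ast\ast}$ only produces a candidate in the bidual, and descending to $X$ is not something one can improvise from \cite[Lemma~2.1]{M2}, which concerns the $M$-ideal $J$, not this descent. This is a known theorem about $L_{1}$-preduals --- a pairwise intersecting family of closed balls whose centers form a compact set has nonempty intersection --- and the paper disposes of the step by citing \cite[Theorem~4.5, pg.~38]{JL} together with \cite[Theorem~6, pg.~212]{L}. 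With that citation in place your argument closes; without it, the central analytic content of the theorem (already nontrivial in the case $J=X$, as you observe) is missing. A smaller point: the \ercp{} of $(B_{J},\mc{K}(X))$, which Definition~\ref{Def0} presupposes, does follow from your construction once the intersection theorem is available, so no separate argument is needed there.
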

				\begin{proof}
					Let $\e>0$ and $F \in \mc{K}(X)$. Let $x \in \emph{cent}_{B_{J}}(F,\e) = S_{\emph{rad}_{B_{J}}(F)+\e}(F) \cap B_{J}$. Obviously, $B[x,\e] \cap B_{X}\neq \es$ and for each $y \in F$, $B[x,\e] \cap B[y,\emph{rad}_{B_{J}}(F)] \neq \es$. By \cite[Corollary~4.8]{CT}, $J$ is ball proximinal in $X$. Hence for each $y \in F$, $B[y,d(y,B_{J})] \cap B_{J} \neq \es$. Also, clearly, for each $y \in F$, $d(y,B_{J}) \leq \emph{rad}_{B_{J}}(F)$. It follows that for each $y \in F$, $B[y,\emph{rad}_{B_{J}}(F)] \cap B_{X} \neq \es$. By \cite[Theorem~2.2]{M2}, $\emph{cent}_{J}(F) = S_{\emph{rad}_{J}(F)}(F) \cap J \neq \es$. Since $\emph{rad}_{J}(F) \leq \emph{rad}_{B_{J}}(F)$, $S_{\emph{rad}_{B_{J}}(F)}(F) \neq \es.$ Now, $\left\{B[y,\emph{rad}_{B_{J}}(F)]:y \in F\right\} \cup \left\{B[x,\e],B_{X}\right\}$ is a collection of closed balls which intersect pairwise. Therefore, by \cite[Theorem~4.5, pg.~38]{JL} and \cite[Theorem~6, pg.~212]{L}, \[B[x,\e] \cap S_{\emph{rad}_{B_{J}}(F)}(F) \cap B_{X} \neq \es.\] It is easily observed that each of the above closed balls intersects $J$. Therefore, by Lemma~\ref{lem0.0}, \[B[x,\e] \cap S_{\emph{rad}_{B_{J}}(F)}(F) \cap B_{X} \cap J \neq \es.\]
				\end{proof}
				
				For a compact Hausdorff space~$S$, the next main result in this section provides a characterization for a strongly proximinal finite co-dimensional subspace $Y$ of $C(S)$ in terms of property-$(P_1)$ of the triplets $(C(S), Y,\mc{K}(C(S)))$ and $(C(S), B_Y,\mc{K}(C(S)))$. To this end, we need the following lemma, which also aids in proving other results in this article.
				
				\begin{lem}\label{lem1}
					Let $V$ be a non-empty closed convex subset of a Banach space~$X$ and $B \in \mc{CB}(X)$. Then for every $\e>0$ and $\gamma>0$, there exists $\de>0$ such that \[\textrm{cent}_{V}(B,\gamma+\de) \ci \textrm{cent}_{V}(B,\gamma) + \e B_{X}.\]
				\end{lem}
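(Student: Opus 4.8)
The plan is to exploit the convexity and $1$-Lipschitz continuity of the radius function $r(\cdot,B)=\sup_{b\in B}\|\cdot-b\|$. Since each map $x\mapsto\|x-b\|$ is convex and $r(\cdot,B)$ is their supremum, $r(\cdot,B)$ is convex on $X$; moreover $\textrm{cent}_{V}(B,\al)=\{v\in V:r(v,B)\le R+\al\}$ is precisely the sublevel set of this convex function intersected with the convex set $V$, where $R:=\textrm{rad}_{V}(B)$. Thus the asserted inclusion is a statement that these sublevel sets vary ``continuously'' in the threshold, and the hypothesis $\ga>0$ is what provides the necessary slack, since then the threshold $R+\ga$ strictly exceeds the infimum $R$. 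This lets me push a point of the larger set $\textrm{cent}_{V}(B,\ga+\de)$ a small distance toward a fixed ``deep interior'' point and land inside $\textrm{cent}_{V}(B,\ga)$.

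Concretely, I would first use $\ga>0$ together with the definition of $R$ as an infimum to select $v_{0}\in V$ with $r(v_{0},B)<R+\ga$, and set $\eta:=R+\ga-r(v_{0},B)>0$; such a $v_0$ exists because $R=\inf_{v\in V}r(v,B)$. Given any $v\in\textrm{cent}_{V}(B,\ga+\de)$, consider the segment $v_{t}=(1-t)v+tv_{0}\in V$, $t\in[0,1]$. By convexity of $r(\cdot,B)$,
\[
r(v_{t},B)\le(1-t)\,r(v,B)+t\,r(v_{0},B)\le R+\ga+(1-t)\de-t\eta,
\]
so the choice $t=\de/(\de+\eta)$ makes $(1-t)\de-t\eta=0$ and forces $r(v_t,B)\le R+\ga$, i.e. $v_{t}\in\textrm{cent}_{V}(B,\ga)$.

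It then remains to estimate $\|v-v_{t}\|=t\|v-v_{0}\|=\tfrac{\de}{\de+\eta}\|v-v_{0}\|$. The one place where boundedness must be invoked is in controlling $\|v-v_{0}\|$ uniformly over the (possibly unbounded) set $V$: picking any $b\in B$ and applying the triangle inequality gives $\|v-v_{0}\|\le r(v,B)+r(v_{0},B)\le 2(R+\ga)+\de$, which is at most $M:=2(R+\ga)+1$ once $\de\le 1$. Hence $\|v-v_{t}\|\le(\de/\eta)M$, and choosing $\de=\min\{1,\e\eta/M\}$ yields $\|v-v_{t}\|\le\e$, so $v\in\textrm{cent}_{V}(B,\ga)+\e B_{X}$; if $\textrm{cent}_{V}(B,\ga+\de)=\es$ the inclusion is vacuous. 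I expect the only genuine obstacle to be this uniform distance bound, which is exactly where the boundedness of $B$ enters, the remainder being a routine convexity computation.
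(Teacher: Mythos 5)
Your proof is correct and is essentially the paper's own argument: both take a convex combination of the given $v\in\textrm{cent}_{V}(B,\ga+\de)$ with a fixed point of $V$ having positive slack below $R+\ga$ (the paper uses $v'\in\textrm{cent}_{V}(B,\ga/2)$, so $\eta=\ga/2$ and your $t=\de/(\de+\eta)$ is exactly its $\la=2\de/(2\de+\ga)$), choose the parameter to cancel the excess $\de$, and bound the displacement by the triangle inequality through a point of $B$. Keeping the slack $\eta$ as a parameter rather than fixing it at $\ga/2$ is an inessential variation (and incidentally avoids the paper's harmless requirement $\de<R$).
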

				\begin{proof}
					Let $\e>0$, $\ga>0$ and $R = \emph{rad}_{V}(B)$. We choose $\de>0$ such that $\de< \min\left\{R,\frac{\e \ga}{6R + 4\ga}\right\}$. Let $v \in \emph{cent}_{V}(B,\ga+\de)$. Then $r(v,B) \leq R+ \ga + \de$. Further, let $v^{\prime} \in \emph{cent}_{V}\left(B, \frac{\ga}{2}\right)$. We define $\la = \frac{2\de}{2 \de + \ga}$ and $\tilde{v} = (1-\la) v + \la v^{\prime}$. After performing some easy calculations, for each $b \in B$, we obtain $\|\tilde{v}-b\|< R+\ga$ and hence, it follows that $r(\tilde{v},B) \leq R+\ga$. Also, for each $b \in B$, \[\|v -\tilde{v}\| \leq \la(\|v-b\| + \|v^{\prime}-b\|) < \frac{2 \de}{2 \de + \ga}\left(3R  + 2 \ga \right) <  \e.\]
				\end{proof}		
				
				\begin{rem}\label{rem1}
					If $X$ is an $L_{1}$-predual space, then it follows from \cite[Corollary~3.4]{BR} and \cite[Theorem~4.5, p.~38]{JL} that for each $F \in \mc{K}(X)$, $\textrm{cent}_{X}(F) \neq \es$.
				\end{rem}
				
				\begin{thm}\label{thm2}
					Let $S$ be a compact Hausdorff space and $\{\mu_{1},\ldots,\mu_{n}\} \ci C(S)^{\ast}$ such that for each $i=1,\ldots,n$, $\|\mu_i\|=1$. If for each $i=1,\ldots,n$, $S(\mu_{i})$ is finite and $Y=\bigcap_{i=1}^{n} ker(\mu_{i})$, then $(C(S),B_{Y},\mc{K}(C(S)))$ has property-$(P_{1})$.
				\end{thm}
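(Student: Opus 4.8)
The plan is to exploit the finiteness of the supports to reduce everything to a fixed finite subset of $S$, and then to upgrade an approximate restricted centre to an exact one by a small, locally supported correction, using the ball-intersection properties of the $L_1$-predual $C(S)$ in the same spirit as the proof of Theorem~\ref{thm0}. Write $E=\bigcup_{i=1}^{n} S(\mu_i)$; since each $S(\mu_i)$ is finite, $E$ is a finite subset of $S$ and each $\mu_i=\sum_{s\in E}\mu_i(\{s\})\,\de_s$ is a finite combination of point masses with $\sum_{s\in E}|\mu_i(\{s\})|=\|\mu_i\|=1$. Fix $F\in\mc{K}(C(S))$ and $\e>0$, and set $R=\textrm{rad}_{B_{Y}}(F)$. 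The target is a threshold $\de(\e,F)>0$ with $\textrm{cent}_{B_{Y}}(F,\de)\ci\textrm{cent}_{B_{Y}}(F)+\e B_{X}$.

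First I would settle the \trcp~for $(B_{Y},\mc{K}(C(S)))$, which is needed even to invoke property-$(P_1)$. Since $C(S)$ is an $L_1$-predual, Chebyshev centres of compact sets exist by Remark~\ref{rem1}, so $S_{R}(F)=\bigcap_{f\in F}B[f,R]\neq\es$; the non-emptiness of $\textrm{cent}_{B_{Y}}(F)$ itself will in fact drop out of the correction step below, since that step turns \emph{any} near-centre into an exact one. The heart of the argument is this passage from approximate to exact centres. Fix $g\in\textrm{cent}_{B_{Y}}(F,\de)$ with $\de\le\e/2$, so $g\in B_{Y}\ci B_{X}$ and $\|g-f\|\le R+\de$ for every $f\in F$; I must produce a point of $B[g,\e]\cap\bigl(\bigcap_{f\in F}B[f,R]\bigr)\cap B_{X}\cap\bigl(\bigcap_{i=1}^{n}\ker(\mu_i)\bigr)$. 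Dropping the kernel constraints, the balls $B[g,\e/2]$, $B_{X}$ and $\{B[f,R]:f\in F\}$ are pairwise intersecting (the only delicate pairs $B[g,\e/2]\cap B[f,R]$ are handled by $\|g-f\|\le R+\de\le R+\e/2$, exactly as in Theorem~\ref{thm0}), so the $n.2.I.P.$ of the $L_1$-predual $C(S)$ — the machinery of \cite{JL} invoked in that proof — yields a common point $h_{0}\in B[g,\e/2]\cap S_{R}(F)\cap B_{X}$.

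It then remains to correct $h_{0}$ into $\bigcap_{i}\ker(\mu_i)$. Since $Y$ is not an $M$-ideal I cannot appeal to Lemma~\ref{lem0.0}; instead I would use the finiteness of $E$. Choosing pairwise disjoint open neighbourhoods $U_{s}$ of the points $s\in E$ and Urysohn bumps supported in them, the admissible corrections span a finite-dimensional space on which $\mu_{1},\dots,\mu_{n}$ act as finitely many linear functionals, so the conditions $\mu_i(h_{0}+\psi)=0$ can be solved by a perturbation $\psi$ supported near $E$; shrinking the $U_{s}$ makes $\|\psi\|\le\e/2$ and keeps $h_0+\psi$ inside $B_{X}$ and every $B[f,R]$. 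The resulting $h=h_0+\psi$ then lies in $\textrm{cent}_{B_{Y}}(F)$ with $\|g-h\|\le\e$, so $\de=\e/2$ works. Here the hypothesis $\|\mu_i\|=1$ together with finite support is precisely what makes the defining functionals strongly subdifferentiable, which is the analytic reason the local correction can be carried out with small norm while respecting the balls.

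The main obstacle is exactly this last correction. Lemma~\ref{lem1} only shifts between strictly positive levels $\ga+\de\to\ga$ and is silent about reaching level $0$, so the genuine work is driving the restricted radius down to $R$ while simultaneously restoring the $n$ linear constraints $\mu_i(\cdot)=0$ and staying in $B_{X}$, all within an $\e$-ball. The delicate point I expect to need the most care is choosing the neighbourhoods $U_{s}$ small enough that $h_{0}$ and every $f\in F$ have strict slack (value of $|h_0-f|$ below $R$) on $\bigcup_{s}U_{s}$, \emph{uniformly} over the compact set $F$, so that adding $\psi$ never breaches a ball $B[f,R]$; the finiteness of $E$ and the compactness of $F$ are what make this uniform slack available and the constraint system finite-dimensional and solvable with controlled norm.
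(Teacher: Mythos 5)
Your overall architecture (reduce to the finite set $E$ of support points, use the ball--intersection machinery of the $L_1$-predual to produce a global function in $S_{R}(F)\cap B_{C(S)}$ near $g$, then fix up the finitely many linear constraints by a correction localized at $E$) starts out parallel to the paper's, but the correction step contains a genuine gap. You propose to solve $\mu_i(h_0+\psi)=0$ by a perturbation $\psi$ supported in small neighbourhoods $U_s$ of the points of $E$, and you justify staying inside $B_{C(S)}$ and inside every $B[f,R]$ by a claimed ``uniform strict slack'' of $|h_0-f|$ below $R$ near $E$. That slack does not exist in general: the ball constraints are typically \emph{active} exactly at the support points, because those are the points where the conditions $\mu_i(\cdot)=0$ force a centre away from the unconstrained optimum. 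For instance, take $n=1$, $\mu_1=\de_{s_0}$, $F=\{f\}$ with $f\equiv 1/2$; then $R=\textrm{rad}_{B_Y}(F)=1/2$ and every $h\in S_R(F)\cap B_Y$ satisfies $|h(s_0)-f(s_0)|=1/2=R$, so no neighbourhood of $s_0$ carries any slack, uniform or otherwise. (Shrinking $U_s$ also does not shrink $\|\psi\|_\infty$, which is dictated by the values $\psi(s)$, $s\in E$, that the linear system forces; that part is only salvageable because $|\mu_i(h_0)|\le\|h_0-g\|$ is small, and even then the admissible corrected values must stay inside the box $[\sup_{f}f(s)-R,\inf_f f(s)+R]\cap[-1,1]$, which your $\psi$ need not respect.)

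What is missing is precisely the device the paper uses in place of your perturbation: the restricted Chebyshev centre problem for the trace set $\tilde F=\{(f(s))_{s\in E}:f\in F\}$ relative to the \emph{compact convex} set $A\ci[-1,1]^{|E|}$ of value-tuples satisfying the linear constraints. Compactness of $A$ gives property-$(P_1)$ of $(\mb{R}^{|E|},A,\mc{CB}(\mb{R}^{|E|}))$ for free, and this (together with Lemma~\ref{lem1} to handle the case where the finite-dimensional optimal value $\al$ is strictly smaller than $R$ --- a case your sketch does not see at all) is what produces, near the trace of $g$, an \emph{admissible} tuple $z\in A$ with $r(z,\tilde F)\le R$. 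The paper then extends $z$ not by adding a bump but by interpolating it with some $g'\in B_{C(S)}$ and truncating via $\max\{\sup_f f-R,\,g-\e,\,-1\}\le\cdot\le\min\{\inf_f f+R,\,g+\e,\,1\}$, which enforces membership in every $B[f,R]$, in $B_{C(S)}$ and in $B[g,\e]$ by construction rather than by a slack argument. The same mechanism, not your correction step, is also what yields $\textrm{cent}_{B_Y}(F)\neq\es$ in the first place. As written, your proof does not go through; repairing it essentially forces you back onto the paper's finite-dimensional reduction.
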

				
				\begin{proof}
					We employ techniques similar to those used in the proof of \cite[Proposition~4.2]{CS}. We prove the result only for $n=2$ because the same ideas work to prove the result for $n \neq 2$. Let $\mu_{1} = \sum_{i=1}^{m} \al_{i} \de_{k_{i}}$, $\mu_{2} = \sum_{j=1}^{r} \be_{j} \de_{t_{j}}$, $Y = ker(\mu_{1}) \cap ker(\mu_{2})$ and $F \in \mc{K}(C(S))$.
					
					{\sc Case 1:} $S(\mu_{1}) \cap S(\mu_{2}) =\es$.
					
					Let us define 
					\begin{equation}\label{eqn2.1}
						A = \left\{(\ga_{1},\ldots,\ga_{m},\ga^{\prime}_{1},\ldots,\ga^{\prime}_{r}) \in [-1,1]^{m+r}: \sum_{i=1}^{m} \al_{i} \ga_{i} = 0\mbox{ and }\sum_{j=1}^{r} \be_{j} \ga^{\prime}_{j} = 0\right\}
					\end{equation}
					and 
					\begin{equation}\label{eqn2.2}
						\al = \inf \left\{\sup_{f \in F} \max_{\substack{1 \leq i \leq m\\ 1 \leq j \leq r}} \{|\ga_{i}-f(k_{i})|, |\ga^{\prime}_{j} - f(t_{j})|\}: (\ga_{1},\ldots,\ga_{m},\ga^{\prime}_{1},\ldots,\ga^{\prime}_{r}) \in A\right\}.
					\end{equation}
					
					For each $f \in F$, the continuity of the map \[(\ga_{1},\ldots,\ga_{m},\ga^{\prime}_{1},\ldots,\ga^{\prime}_{r}) \mapsto \max_{\substack{1 \leq i \leq m\\ 1 \leq j \leq r}} \{|\ga_{i}-f(k_{i})|, |\ga^{\prime}_{j} - f(t_{j})|\}\] on $\mb{R}^{m+r}$ implies the lower semicontinuity of the map \[(\ga_{1},\ldots,\ga_{m},\ga^{\prime}_{1},\ldots,\ga^{\prime}_{r}) \mapsto \sup_{f \in F} \max_{\substack{1 \leq i \leq m\\ 1 \leq j \leq r}} \{|\ga_{i}-f(k_{i})|, |\ga^{\prime}_{j} - f(t_{j})|\}\] on $\mb{R}^{m+r}$. The set~$A \ci \mb{R}^{m+r}$ is non-empty and compact and hence, the infimum in ($\ref{eqn2.2}$) is attained. Let $(\eta_{1},\ldots,\eta_{m},\eta^{\prime}_{1},\ldots,\eta^{\prime}_{r}) \in A$ be such that 
					\begin{equation}\label{eqn2.3}
						\al = \sup_{f \in F} \max_{\substack{1 \leq i \leq m\\ 1 \leq j \leq r}} \{|\eta_{i}-f(k_{i})|, |\eta^{\prime}_{j} - f(t_{j})|\}.
					\end{equation}
					Therefore, for each $f \in F$, 
					\begin{equation}\label{eqn2.4}
						\begin{split}
							&-\al+ \eta_{i} \leq f(k_{i}) \leq \al+ \eta_{i}\mbox{ for }i=1,\ldots,m\mbox{ and }\\ &-\al+ \eta^{\prime}_{j} \leq f(t_{j}) \leq \al+ \eta^{\prime}_{j}\mbox{ for }j=1,\ldots,r.\\
						\end{split}
					\end{equation}
					Let $R = \emph{rad}_{B_{Y}}(F)$. It follows from the definition of $\al$ that $R \geq \al$. Therefore, from the inequalities in ($\ref{eqn2.4}$), it follows that for each $f \in F$, 
					\begin{equation}\label{eqn2.6}
						\begin{split}
							&-R+ \eta_{i} \leq f(k_{i}) \leq R+ \eta_{i}\mbox{ for }i=1,\ldots,m\mbox{ and }\\ &-R+ \eta^{\prime}_{j} \leq f(t_{j}) \leq R+ \eta^{\prime}_{j}\mbox{ for }j=1,\ldots,r.\\
						\end{split}
					\end{equation}
					Now, from Remark~\ref{rem1}, $\emph{cent}_{C(S)}(F) = S_{\emph{rad}_{C(S)}(F)}(F) \neq \es$. Since $\emph{rad}_{C(S)}(F) \leq  R$, $S_{R}(F) \neq \es$. By \cite[Proposition~4.2]{CS}, $Y$ is ball proximinal in $C(S)$. Therefore, for each $f \in F$, $B[f, d(f,B_{Y})] \cap B_{Y} \neq \es$. It follows that for each $f \in F$, $B[f,R] \cap B_{C(S)}  \neq \es$. Since $C(S)$ is an $L_{1}$-predual space and $F$ is compact, by \cite[Theorem~4.5, pg.~38]{JL}, $S_{R}(F) \cap B_{C(S)} \neq \es.$ Let $g_{0} \in B_{C(S)} \cap S_{R}(F)$. Then for each $f \in F$ and $t \in S$, 
					\begin{equation}\label{eqn2.7}
						f(t)-R \leq g_{0}(t) \leq f(t)+R.
					\end{equation}	
					It follows that for $t \in S$, 
					\begin{equation}\label{eqn2.8}
						\sup_{f \in F} f(t)-R \leq \inf_{f \in F} f(t)+R.
					\end{equation}
					It also follows from ($\ref{eqn2.7}$) that for each $f \in F$ and $t \in S$, 
					\begin{equation}\label{eqn2.9}
						-1 - R \leq f(t) \leq R+ 1.
					\end{equation}	
					Now, choose $g \in B_{C(S)}$ such that $g(k_{i})=\eta_{i}$, for $i=1,\ldots,m$ and $g(t_{j})=\eta^{\prime}_{j}$, for $j=1,\ldots,r$. Let $h_{0}: S \rightarrow \mb{R}$ be defined as $h_{0} = \min\{g,\inf_{f \in F} f+R\}$. The compactness of $F$ ensures $h_{0} \in C(S)$. Further, define $h: S \rightarrow \mb{R}$ as $h = \max\{h_{0},\sup_{f \in F} f-R\}$. Then from the inequalities in ($\ref{eqn2.6}$), ($\ref{eqn2.8}$) and ($\ref{eqn2.9}$), it follows that $h \in B_{C(S)}$; $h(k_{i})=\eta_{i}$, for $i=1,\ldots,m$; $h(t_{j})= \eta^{\prime}_{j}$, for $j=1,\ldots,r$ and for each $t \in S$, $\sup_{f \in F} f(t)-R \leq h(t) \leq \inf_{f \in F} f(t)+R.$ Therefore, $h \in \emph{cent}_{B_{Y}}(F)$.
					
					Now, we prove that $(C(S), B_{Y}, \{F\})$ satisfies property-$(P_{1})$. Let $\e>0$. Let $X= \mb{R}^{m+r}$, equipped with the supremum norm and \[\tilde{F} = \{x_{f}=(f(k_{1}),\ldots,f(k_{m}),f(t_{1}),\ldots,f(t_{r})) \in X: f \in F\} \in \mc{K}(X).\]
					
					{\sc Subcase 1:} $R=\al$.
					
					Due to the compactness of the set~$A$, $(X,A,\mc{CB}(X))$ has property-$(P_{1})$. Hence, there exists $0<\de<\e$ such that $\emph{cent}_{A}(\tilde{F},\de) \ci \emph{cent}_{A}(\tilde{F}) + \e B_{X}$.
					
					Let $g \in \emph{cent}_{B_{Y}}(F,\de)$. Then $x_{g} = (g(k_{1}),\ldots,g(k_{m}),g(t_{1}),\ldots,g(t_{r})) \in \emph{cent}_{A}(\tilde{F},\de)$. Therefore, there exists $z=(z_{1},\ldots,z_{m},z^{\prime}_{1},\ldots,z^{\prime}_{r}) \in \emph{cent}_{A}(\tilde{F})$ such that $\|x_{g}-z\| \leq \e$. Now, choose $g^{\prime} \in B_{C(S)}$ such that $g^{\prime}(k_{i})=z_{i}$, for $i=1,\ldots,m$ and $g^{\prime}(t_{j})= z^{\prime}_{j}$, for $j=1,\ldots,r$. Let $f_{1} =\max \{\sup_{f \in F} f-R, g-\e, -1\}$ and $f_{2} = \min \{\inf_{f \in F} f+R, g+\e, 1\}$. Then $f_{1} \leq g^{\prime} \leq f_{2}$ on $\{k_{1},\ldots,k_{m},t_{1},\ldots,t_{r}\}$. Let $h_{1} = \max\{f_{1},g^{\prime}\}$ and $h_{2} = \min\{h_{1},f_{2}\}$. Since $r(g,F) \leq R+\de < R+\e$. It follows that $\sup_{f \in F} f-R \leq g+\e$ and $g-\e \leq \inf_{f \in F} f+R$. Also, from the inequalities in $(\ref{eqn2.9})$, it follows that $\sup_{f \in F} f-R \leq 1$ and $-1 \leq  \inf_{f \in F} f+R$. Further, since $g \in B_{Y}$, $-1 \leq g \leq 1$ and hence, $g-\e \leq 1$. Therefore, $f_{1} \leq f_{2}$ and $f_{1} \leq h_{1}$. We can then conclude that $h_{2} = g^\prime$ on $\{k_{1},\ldots,k_{m},t_{1},\ldots,t_{r}\}$ and $f_{1} \leq h_{2} \leq f_{2}$ on $S$. Therefore, $h_{2} \in B_{Y}$, $\sup_{f \in F} f-R \leq h_{2} \leq \inf_{f \in F} f+R$ and $g-\e \leq h_{2} \leq g+\e$. This implies $h_{2} \in \emph{cent}_{B_{Y}}(F)$ and $\|g-h_{2}\| \leq \e$. Hence, $(C(S), B_{Y},\{F\})$ satisfies property-$(P_{1})$.
					
					{\sc Subcase 2:} $R>\al$.
					
					Let $\be = R-\al$. By Lemma~\ref{lem1}, there exists $0<\de<\e$ such that $\emph{cent}_{A}(\tilde{F},\be+\de) \ci \emph{cent}_{A}(\tilde{F},\be)  +\e B_{X}$. 
					
					Let $g \in \emph{cent}_{B_{Y}}(F,\de)$. Then $x_{g} = (g(k_{1}),\ldots,g(k_{m}),g(t_{1}),\ldots,g(t_{r})) \in \emph{cent}_{A}(\tilde{F},\be +\de)$. Therefore, there exists $z =(z_{1},\ldots,z_{m},z^{\prime}_{1},\ldots,z^{\prime}_{r}) \in \emph{cent}_{A}(\tilde{F},\be)$ such that $\|x_{g}-z\| \leq \e$. Therefore, $r(z,\tilde{F}) \leq \al+\be = R$. Now, choose $g^{\prime} \in B_{C(S)}$ such that $g^{\prime}(k_{i})=z_{i}$ and $g^{\prime}(t_{j})= z^{\prime}_{j}$, for $i=1,\ldots,m$ and $j=1,\ldots,r$. Then by following the same steps as in the last paragraph of {\sc Subcase 1}, we can prove that $(C(S),B_{Y},\{F\})$ satisfies property-$(P_{1})$.
					
					{\sc Case 2:} $S(\mu_{1}) \cap S(\mu_{2}) \neq \es$.
					
					Without loss of generality, for simplicity, we assume that $S(\mu_{1}) \cap S(\mu_{2}) = \{k_{1},\ldots,k_{s}\}$, where $k_{i}=t_{i}$, for $1 \leq i \leq s \leq \min\{m,r\}$. Let us define 
					\begin{equation}\label{eqn2.10}
						\begin{split}
							B = \{(\ga_{1},\ldots,\ga_{m},\ga^{\prime}_{1},\ldots,\ga^{\prime}_{r}) \in [-1,1]^{m+r}: \ga_{i} = \ga^{\prime}_{i}\mbox{ for }1 \leq &i \leq s;\sum_{i=1}^{m} \al_{i} \ga_{i} = 0\\ &\mbox{ and }\sum_{j=1}^{r} \be_{j} \ga^{\prime}_{j}=0 \}
						\end{split}
					\end{equation}
					and 
					\begin{equation}\label{eqn2.11}
						\al^{\prime} = \inf \left\{\sup_{f \in F} \max_{\substack{1 \leq i \leq m\\ 1 \leq j \leq r}} \{|\ga_{i}-f(k_{i})|, |\ga^{\prime}_{j} - f(t_{j})|\}: (\ga_{1},\ldots,\ga_{m},\ga^{\prime}_{1},\ldots,\ga^{\prime}_{r}) \in B\right\}.
					\end{equation}
					Applying the same argument as in {\sc Case 1}, we can show that the infimum in ($\ref{eqn2.11}$) is attained say at $(\eta_{1},\ldots,\eta_{m},\eta^{\prime}_{1},\ldots,\eta^{\prime}_{r}) \in B$. We further proceed the same way as in {\sc Case 1} to first prove that $\emph{cent}_{B_{Y}}(F) \neq \es$ and then that $(C(S),B_{Y},\{F\})$ satisfies property-$(P_{1})$.
				\end{proof}
				
				We now prove our main result.				
				\begin{thm}\label{thm2.1}
					Let $S$ be a compact Hausdorff space and $Y$ be a finite co-dimensional subspace of $C(S)$. Then the following statements are equivalent :
					\blr
					\item $Y$ is strongly proximinal in $C(S)$.
					\item $Y$ is strongly ball proximinal in $C(S)$.
					\item $(C(S),Y,\mc{K}(C(S)))$ has property-$(P_{1})$.
					\item $(C(S),B_{Y},\mc{K}(C(S)))$ has property-$(P_{1})$.
					\item $Y^{\perp} \ci \{\mu \in C(S)^{\ast}: \mu\mbox{ is an SSD-point of }C(S)^{\ast}\}$.
					\el
				\end{thm}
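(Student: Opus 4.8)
The plan is to arrange the five statements into a single cycle of implications, reserving the one genuinely new ingredient for the step that invokes Theorem~\ref{thm2}. Concretely, I would establish
\[
(iv)\Rightarrow(ii)\Rightarrow(i)\Rightarrow(v)\Rightarrow(iv),\qquad (iv)\Rightarrow(iii)\Rightarrow(i),
\]
which places all five statements in one equivalence class. The two downward arrows $(iv)\Rightarrow(ii)$ and $(iii)\Rightarrow(i)$ are obtained by specializing property-$(P_1)$ to singletons: every singleton belongs to $\mc{K}(C(S))$, and, as observed after Definition~\ref{Def0}, property-$(P_1)$ restricted to singletons is nothing but strong proximinality (in the sense of Definition~\ref{Def0.0}). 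Thus $(iv)$ forces $B_Y$ to be strongly proximinal, i.e. $(ii)$, and $(iii)$ forces $Y$ itself to be strongly proximinal, i.e. $(i)$. The remaining cheap arrows are $(ii)\Rightarrow(i)$, the general fact from \cite{BLR} that strong ball proximinality implies strong proximinality; $(iv)\Rightarrow(iii)$, which is immediate from Proposition~\ref{prop0.2}$(iii)$; and $(i)\Rightarrow(v)$, the implication recalled in the introduction that the annihilator of a strongly proximinal finite co-dimensional subspace consists of SSD-points of $C(S)^{\ast}$.

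The heart of the argument is the single substantive implication $(v)\Rightarrow(iv)$, which is exactly where Theorem~\ref{thm2} is designed to be applied. Assuming $(v)$, I would first translate the SSD-point hypothesis into the finite-support condition: every $\mu\in Y^{\perp}$ is an SSD-point of $C(S)^{\ast}$, and the known description of such points identifies them with the finitely supported measures. Since $Y$ is finite co-dimensional, $Y^{\perp}$ is finite dimensional, so I can select a basis $\{\mu_{1},\ldots,\mu_{n}\}$ of $Y^{\perp}$ consisting of norm-one measures, each with $S(\mu_{i})$ finite, and with $Y=\bigcap_{i=1}^{n}ker(\mu_{i})$. The hypotheses of Theorem~\ref{thm2} are then satisfied, and that theorem delivers precisely $(iv)$.

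The main obstacle is this bridging step inside $(v)\Rightarrow(iv)$: converting ``$Y^{\perp}$ consists of SSD-points'' into ``$Y^{\perp}$ has finitely supported generators'', so that the finite-support hypothesis of Theorem~\ref{thm2} is met. This is the only point at which the specific structure of $C(S)$, beyond its being an $L_{1}$-predual, is used, and it rests on the characterization of the SSD-points of $C(S)^{\ast}$ (equivalently, of the strongly proximinal hyperplanes $ker(\mu)$) as the finitely supported measures. Everything else is bookkeeping: once the defining measures are known to be finitely supported, Theorem~\ref{thm2} supplies property-$(P_1)$ for $B_Y$, and the singleton specializations together with Proposition~\ref{prop0.2}$(iii)$ propagate it to the other statements. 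It is also reassuring to note that the converse $(v)\Rightarrow(i)$ is available for free, since $C(S)$ is an $L_{1}$-predual space and hence \cite[Proposition~3.20]{CT} applies; this keeps the logical diagram consistent although that arrow is not needed to close the cycle.
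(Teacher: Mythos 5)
Your proposal is correct and follows essentially the same route as the paper: the substantive step in both is $(v)\Rightarrow(iv)$, obtained by identifying the SSD-points of $C(S)^{\ast}$ with the finitely supported measures and then invoking Theorem~\ref{thm2}, with $(iv)\Rightarrow(iii)$ coming from Proposition~\ref{prop0.2} and the remaining arrows from singleton specialization. The only (immaterial) difference is that the paper imports $(i)\Leftrightarrow(ii)\Leftrightarrow(v)$ wholesale from \cite[Theorem~4.3]{CS}, whereas you reassemble those arrows from the individual facts in \cite{BLR} and \cite{GV}.
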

				\begin{proof}
					By \cite[Theorem~4.3]{CS}, $(i) \Leftrightarrow (ii) \Leftrightarrow (v)$. The implication $(v) \Rightarrow (iv)$ follows from \cite[Theorem~2.1]{SD} and Theorem~\ref{thm2}. Also, clearly, $(iii) \Rightarrow (i)$ and from Proposition~\ref{prop0.2}, $(iv) \Rightarrow (iii)$.
				\end{proof}
				
				For a Choquet simplex~$K$ and a finite co-dimensional subspace $Y$ of $A(K)$, the following result provides a sufficient condition for the triplet $(A(K), B_{Y},\mc{K}(A(K)))$ to satisfy property-$(P_{1})$. The convex hull of a non-empty subset $A$ of $K$ is denoted by $conv(A)$.
				
				\begin{thm}\label{thm3}
					Let $K$ be a Choquet simplex and $\{\mu_{1},\ldots,\mu_{n}\} \ci A(K)^{\ast}$ such that for each $i=1,\ldots,n$, $\|\mu_i\|=1$. If for each $i=1,\ldots,n$, $S(\mu_{i})$ is finite, $S(\mu_{i}) \ci ext(K)$ and $Y=\bigcap_{i=1}^{n} ker(\mu_{i})$, then $(A(K),B_{Y},\mc{K}(A(K)))$ has property-$(P_{1})$.
				\end{thm}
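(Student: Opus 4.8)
The plan is to follow the architecture of the proof of Theorem~\ref{thm2} verbatim at the level of bookkeeping, since the finiteness of $S(\mu_i)$ again collapses the problem onto a finite-dimensional constraint set, but to replace every lattice operation used there (the $\min$ and $\max$ of continuous functions, which keep us inside $C(S)$) by an affine interpolation–separation step valid in $A(K)$. It is precisely for this replacement that the hypothesis $S(\mu_i)\ci ext(K)$ is indispensable. As before I would first reduce to $n=2$, write $\mu_{1}=\sum_{i=1}^{m}\al_{i}\de_{k_{i}}$ and $\mu_{2}=\sum_{j=1}^{r}\be_{j}\de_{t_{j}}$ with all $k_{i},t_{j}\in ext(K)$, and treat $S(\mu_1)\cap S(\mu_2)=\es$ and $S(\mu_1)\cap S(\mu_2)\neq\es$ by the same two cases. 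Fixing $F\in\mc{K}(A(K))$, I form the identical compact set $A\ci[-1,1]^{m+r}$ carrying the two kernel conditions, let $\al$ and $R=\emph{rad}_{B_{Y}}(F)$ be defined by the analogue of $(\ref{eqn2.2})$, and obtain by lower semicontinuity on the compact set $A$ a minimiser $(\eta_{1},\ldots,\eta_{m},\eta^{\prime}_{1},\ldots,\eta^{\prime}_{r})\in A$ satisfying the two-sided estimates $(\ref{eqn2.6})$. Since $A(K)$ is an $L_{1}$-predual space, the $n.2.I.P.$ together with Remark~\ref{rem1} yields a global element $g_{0}\in B_{A(K)}\cap S_{R}(F)$ (the balls $\{B[f,R]:f\in F\}\cup\{B_{A(K)}\}$ intersect pairwise because $R=\emph{rad}_{B_{Y}}(F)$), whence $\sup_{f\in F}f-R\leq\inf_{f\in F}f+R$ on $K$ and $-1-R\leq f\leq R+1$ for each $f\in F$, exactly as in $(\ref{eqn2.7})$--$(\ref{eqn2.9})$.

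The decisive step, and the one I expect to be the main obstacle, is the construction of a center $h\in\emph{cent}_{B_{Y}}(F)$ without recourse to $\min$ and $\max$, since those operations destroy affinity. Here I would invoke Edwards' separation theorem for simplices together with interpolation at extreme points (see \cite{AE}): setting $\phi=\max\{\sup_{f\in F}f-R,\,-1\}$, which is bounded convex and lower semicontinuous, and $\psi=\min\{\inf_{f\in F}f+R,\,1\}$, which is bounded concave and upper semicontinuous, the inequalities above give $\phi\leq\psi$ on $K$, while $(\ref{eqn2.6})$ and $(\ref{eqn2.9})$ give $\phi(k_{i})\leq\eta_{i}\leq\psi(k_{i})$ and $\phi(t_{j})\leq\eta^{\prime}_{j}\leq\psi(t_{j})$. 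Because $k_{1},\ldots,k_{m},t_{1},\ldots,t_{r}$ are \emph{distinct} extreme points of the simplex $K$, the simplex interpolation theorem produces $h\in A(K)$ with $\phi\leq h\leq\psi$ on $K$ and $h(k_{i})=\eta_{i}$, $h(t_{j})=\eta^{\prime}_{j}$; the sandwich forces $\|h\|\leq1$ and $\sup_{f\in F}f-R\leq h\leq\inf_{f\in F}f+R$, while the kernel relations built into $A$ give $\mu_{1}(h)=\sum_{i}\al_{i}\eta_{i}=0$ and $\mu_{2}(h)=\sum_{j}\be_{j}\eta^{\prime}_{j}=0$, so $h\in Y$ and $h\in\emph{cent}_{B_{Y}}(F)$. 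In particular this set is non-empty.

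For property-$(P_{1})$ itself I would transfer the problem to the finite-dimensional compact convex set $A$ exactly as in Theorem~\ref{thm2}: with $X=(\mb{R}^{m+r},\|\cdot\|_{\infty})$ the triplet $(X,A,\mc{CB}(X))$ has property-$(P_{1})$ by compactness, which is used directly in the subcase $R=\al$, while in the subcase $R>\al$ one applies Lemma~\ref{lem1} to $\be=R-\al$. Given $g\in\emph{cent}_{B_{Y}}(F,\de)$, its coordinate vector $x_{g}$ lies in the appropriate enlarged finite-dimensional center, so there is a genuine finite-dimensional center $z=(z_{1},\ldots,z_{m},z^{\prime}_{1},\ldots,z^{\prime}_{r})$ with $\|x_{g}-z\|\leq\e$. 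I then feed the coordinates of $z$ back through the same Edwards/extreme-point interpolation, now with the additional affine bounds $g-\e$ and $g+\e$ folded in, i.e. $\phi=\max\{\sup_{f\in F}f-R,\,-1,\,g-\e\}$ and $\psi=\min\{\inf_{f\in F}f+R,\,1,\,g+\e\}$; these remain convex lower semicontinuous and concave upper semicontinuous respectively, $\phi\leq\psi$ follows from the same inequalities as in the last paragraph of Subcase~1 of Theorem~\ref{thm2} (using $r(g,F)\leq R+\de<R+\e$), and $\phi(k_{i})\leq z_{i}\leq\psi(k_{i})$ follows from $z\in A$ and $\|x_{g}-z\|\leq\e$. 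The interpolation then yields $h_{2}\in\emph{cent}_{B_{Y}}(F)$ with $\|g-h_{2}\|\leq\e$.

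The only genuinely new work beyond Theorem~\ref{thm2} is the verification at each stage that the separation data $(\phi,\psi)$ together with the prescribed values at the extreme points form an admissible instance of the simplex interpolation theorem; granting that theorem, Case~2 (shared extreme points, where the constraint set $B$ of $(\ref{eqn2.10})$ imposes a single value at each common point $k_{i}=t_{i}$) is handled identically, since interpolation is still carried out at a finite set of \emph{distinct} extreme points with consistent target values. Thus $(A(K),B_{Y},\mc{K}(A(K)))$ has property-$(P_{1})$.
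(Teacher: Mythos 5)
Your proposal follows the paper's proof essentially verbatim: the same reduction to $n=2$, the same compact constraint set $A$ and transfer to $(\mb{R}^{m+r},\|\cdot\|_{\infty})$ with the two subcases $R=\al$ and $R>\al$ via Lemma~\ref{lem1}, and the same affine construction, which the paper carries out in two cited steps (interpolation at the finitely many extreme points by \cite[Theorem~II.3.12]{AL}, then the sandwich/extension from the closed face $G=conv(\{k_1,\ldots,k_m,t_1,\ldots,t_r\})$ by \cite[Corollary~7.7, p.~73]{AE}) and which you package as a single Edwards-type interpolation–separation statement. The only quibble is your labelling of $\phi$ as convex lower semicontinuous and $\psi$ as concave upper semicontinuous (the separation theorem wants the opposite directions), but this is harmless since compactness of $F$ makes $\sup_{f\in F}f$ and $\inf_{f\in F}f$, and hence $\phi$ and $\psi$, continuous.
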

				\begin{proof}
					We employ techniques similar to those used in the proof of \cite[Theorem~5.4]{CS}. We prove the result only for $n=2$ because the same ideas work to prove the result for $n \neq 2$. Let $\mu_{1} = \sum_{i=1}^{m} \al_{i} \de_{k_{i}}$ and $\mu_{2} = \sum_{j=1}^{r} \be_{j} \de_{t_{j}}$ and $Y = ker(\mu_{1}) \cap ker(\mu_{2})$. Let $F \in \mc{K}(A(K))$.
					
					{\sc Case 1:} $S(\mu_{1}) \cap S(\mu_{2}) =\es$.
					
					Let $\al, A$ be as defined in the proof of {\sc Case 1} of Theorem~\ref{thm2} and following the same argument as in that proof, let $(\eta_{1},\ldots,\eta_{m},\eta^{\prime}_{1},\ldots,\eta^{\prime}_{r}) \in A$ be such that \[\al = \sup_{f \in F} \max_{\substack{1 \leq i \leq m\\ 1 \leq j \leq r}} \{|\eta_{i}-f(k_{i})|, |\eta^{\prime}_{j} - f(t_{j})|\}.\] Let $R = \emph{rad}_{B_{Y}}(F)$. Then from the definition of $\al$ it follows that $R \geq \al$ and hence for each $f \in F$, 
					\begin{equation}\label{eqn3.1}
						\begin{split}
							&-R+ \eta_{i} \leq f(k_{i}) \leq R+ \eta_{i}\mbox{ for }i=1,\ldots,m\mbox{ and }\\ &-R+ \eta^{\prime}_{j} \leq f(t_{j}) \leq R+ \eta^{\prime}_{j}\mbox{ for }j=1,\ldots,r.\\
						\end{split}
					\end{equation} 
					It follows from Remark~\ref{rem1} that $\emph{cent}_{A(K)}(F) = S_{\emph{rad}_{A(K)}(F)}(F) \neq \es$. Since $\emph{rad}_{A(K)}(F) \leq R$, $S_{R}(F) \neq \es$. By \cite[Theorem~5.4]{CS}, $Y$ is ball proximinal in $A(K)$. Therefore, for each $f \in F$, $B[f, d(f,B_{Y})] \cap B_{Y} \neq \es$. For each $f \in F$, since $d(f,B_{Y}) \leq R$, it follows that $B_{A(K)} \cap B[f,R] \neq \es$. Hence, by \cite[Theorem~4.5, pg.~38]{JL}, $B_{A(K)} \cap S_{R}(F) \neq \es.$ Let $g_{0} \in B_{A(K)} \cap S_{R}(F)$. Then for each $f \in F$ and $t \in K$, 
					\begin{equation}\label{eqn3.2}
						f(t)-R \leq g_{0}(t) \leq f(t)+R.
					\end{equation}	
					It follows that for $t \in K$, 
					\begin{equation}\label{eqn3.3}
						\sup_{f \in F} f(t)-R \leq \inf_{f \in F} f(t)+R.
					\end{equation}
					It also follows from ($\ref{eqn3.2}$) that for each $f \in F$ and $t \in K$, 
					\begin{equation}\label{eqn3.4}
						-1 - R \leq f(t) \leq R+ 1.
					\end{equation}	
					Let us choose $g \in B_{C(K)}$ such that $g(k_{i})=\eta_{i}$, for $i=1,\ldots,m$ and $g(t_{j}) = \eta^{\prime}_{j}$, for $j=1,\ldots,r$. Define $h_{0}: K \rightarrow \mb{R}$ as follows: for each $t \in K$,
					\[h_{0}(t) =  \begin{cases} 
						\inf_{f \in F} f(t) + R &\mbox{, if }g(t) \geq \inf_{f \in F} f(t) +R \\
						g(k) &\mbox{, if }\sup_{f \in F} f(t) -R  \leq g(t) \leq \inf_{f \in F} f(t) +R\\
						\sup_{f \in F} f(t) -R &\mbox{, if }g(t) \leq \sup_{f \in F} f(t) -R.\\ 
					\end{cases}
					\]
					The compactness of $F$ and the inequalities in ($\ref{eqn3.4}$) ensure $h_{0} \in B_{C(K)}$. By the definition of $h_{0}$, $\sup_{f \in F} f-R \leq h_{0} \leq \inf_{f \in F} f+ R$ on $K$. From the inequalities in ($\ref{eqn3.1}$), it follows that for $i=1,\ldots,m$, $h_{0}(k_{i}) =\eta_{i}$ and for $j=1,\ldots,r$, $h_{0}(t_{j})=\eta^{\prime}_{j}$. Hence, $\sum_{i=1}^{m} \al_{i} h_{0}(k_{i}) =0 = \sum_{j=1}^{r} \be_{j} h_{0}(t_{j})$.
					
					Now, by \cite[Theorem~II.3.12]{AL}, there exists $h \in B_{A(K)}$ such that for each $i=1,\ldots,m$ and $j=1,\ldots,r$, $h(k_{i}) =h_{0}(k_{i})$ and $h(t_{j}) = h_{0}(t_{j})$. Let $G = conv(\{k_{1},\ldots,k_{m},t_{1},\ldots,t_{r}\})$. Then $G$ is a closed face of $K$. Further, for each $f \in F$, $f-R \leq h \leq f+R$ on $G$ and hence, $\sup_{f \in F} f-R \leq h \leq \inf_{f \in F} f+R$ on $G$. Also, $-1 \leq h \leq 1$ on $G$. Therefore, from the inequalities in ($\ref{eqn3.4}$), it follows that \[\max\left\{-1, \sup_{f \in F} f-R\right\} \leq h \leq \min\left\{1, \inf_{f \in F} f+R\right\}\mbox{ on }G\] and \[\max\left\{-1, \sup_{f \in F} f-R\right\} \leq \min\left\{1, \inf_{f \in F} f+R\right\}\mbox{ on }K.\] Note that $\max\left\{-1, \sup_{f \in F} f-R\right\}$ and $-\min\left\{1, \inf_{f \in F} f+R\right\}$ are convex continuous functions on $K$. Therefore, by \cite[Corollary~7.7, p.~73]{AE}, there exists $\tilde{h} \in A(K)$ such that $\tilde{h} = h$ on $G$ and $\max\left\{-1, \sup_{f \in F} f-R\right\} \leq \tilde{h} \leq \min\left\{1, \inf_{f \in F} f+R\right\}$ on $K$. It follows that $\tilde{h} \in \emph{cent}_{B_{Y}}(F)$.
					
					Now, we prove that $(A(K),B_{Y},\{F\})$ satisfies property-$(P_{1})$. Let $\e>0$. Let $X =\mb{R}^{m+r}$, equipped with the supremum norm and \[\tilde{F} = \{x_{f}=(f(k_{1}),\ldots,f(k_{m}),f(t_{1}),\ldots,f(t_{r})) \in X: f \in F\} \in \mc{K}(X).\]
					
					{\sc Subcase 1:} $R=\al$.
					
					The set~$A \ci X$ is compact and hence, $(X,A,\mc{CB}(X))$ has property-$(P_{1})$. Therefore, there exists $0<\de<\e$ such that $\emph{cent}_{A}(\tilde{F},\de) \ci \emph{cent}_{A}(\tilde{F}) + \e B_{X}$.
					
					Let $g \in \emph{cent}_{B_{Y}}(F,\de)$. Then $x_{g} = (g(k_{1}),\ldots,g(k_{m}),g(t_{1}),\ldots,g(t_{r})) \in \emph{cent}_{A}(\tilde{F},\de)$. Therefore, there exists $z=(z_{1},\ldots,z_{m},z^{\prime}_{1},\ldots,z^{\prime}_{r}) \in \emph{cent}_{A}(\tilde{F})$ such that $\|x_{g}-z\| \leq \e$. Now, choose $g^{\prime} \in B_{C(K)}$ such that $g^{\prime}(k_{i})=z_{i}$ and $g^{\prime}(t_{j})= z^{\prime}_{j}$, for $i=1,\ldots,m$ and $j=1,\ldots,r$. Then by \cite[Theorem~II.3.12]{AL}, there exists $h^{\prime} \in B_{A(K)}$ such that $h^{\prime}(k_{i}) =g^{\prime}(k_{i})=z_{i}$, for $i=1,\ldots,m$ and $h^{\prime}(t_{j}) =g^{\prime}(t_{j})=z^{\prime}_{j}$, for $j=1,\ldots,r$. Therefore, $\sum_{i=1}^{m} \al_{i} h^{\prime}(k_{i}) = 0 = \sum_{j=1}^{r} \be_{j} h^{\prime}(t_{j})$.
					
					Let $G = conv(\{k_{1},\ldots,k_{m},t_{1},\ldots,t_{r}\})$. Then $G$ is a closed face of $K$. Clearly, $\sup_{f \in F} f-R \leq h^{\prime} \leq \inf_{f\in F} f+R$ on $G$, $g-\e \leq h^{\prime} \leq g+\e$ on $G$ and $-1 \leq h^{\prime} \leq 1$ on $G$. Since $r(g,F) \leq R+\de< R+\e$, it follows that $\sup_{f \in F} f-R \leq g+\e$ on $K$ and $g-\e \leq \inf_{f \in F} f+R$ on $K$. Since $g \in B_{Y}$, $-1 \leq g \leq 1$ and hence $g-\e\leq 1$ on $K$. Therefore, \[\max\left\{\sup_{f \in F} f-R, g-\e, -1\right\} \leq h^{\prime} \leq \min\left\{\inf_{f \in F} f+R, g+\e,1\right\}\mbox{ on }G\] and \[\max\left\{\sup_{f \in F} f-R, g-\e, -1\right\} \leq \min\left\{\inf_{f \in F} f+R, g+\e,1\right\}\mbox{ on }K.\] Also, note that $\max\{\sup_{f \in F} f-R, g-\e, -1\}$ and $-\min\{\inf_{f \in F} f+R, g+\e,1\}$ are convex continuous functions on $K$. Therefore, by \cite[Corollary~7.7, p.~73]{AE}, there exists $h \in A(K)$ such that $h = h^{\prime}$ on $G$ and \[\max\{\sup_{f \in F} f-R, g-\e, -1\} \leq h \leq \min\{\inf_{f \in F} f+R, g+\e,1\}\mbox{ on }K.\] It follows that $h \in \emph{cent}_{B_{Y}}(F)$ such that $\|g-h\| \leq \e$. Hence, $(A(K),B_{Y},\{F\})$ satisfies property-$(P_{1})$.
					
					{\sc Subcase 2:} $R>\al$.
					
					Let $\be = R-\al$. By Lemma~\ref{lem1}, there exists $0<\de<\e$ such that $\emph{cent}_{A}(\tilde{F},\be+\de) \ci \emph{cent}_{A}(\tilde{F},\be)  +\e B_{X}$. 
					
					Let $g \in \emph{cent}_{B_{Y}}(F,\de)$. Then $x_{g} = (g(k_{1}),\ldots,g(k_{m}),g(t_{1}),\ldots,g(t_{r})) \in \emph{cent}_{A}(\tilde{F},\be +\de)$. Therefore, there exists $z =(z_{1},\ldots,z_{m},z^{\prime}_{1},\ldots,z^{\prime}_{r}) \in \emph{cent}_{A}(\tilde{F},\be)$ such that $\|x_{g}-z\| \leq \e$. Therefore, $r(z,\tilde{F}) \leq \al+\be = R$. Now, choose $g^{\prime} \in B_{C(K)}$ such that $g^{\prime}(k_{i})=z_{i}$ and $g^{\prime}(t_{j})= z^{\prime}_{j}$, for $i=1,\ldots,m$ and $j=1,\ldots,r$. Therefore, by \cite[Theorem~II.3.12]{AL}, there exists $h^{\prime} \in B_{A(K)}$ such that $h^{\prime}(k_{i}) =g^{\prime}(k_{i})=z_{i}$, for $i=1,\ldots,m$ and $h^{\prime}(t_{j}) =g^{\prime}(t_{j})=z^{\prime}_{j}$, for $j=1,\ldots,r$. Then by following the same steps as in the last paragraph of {\sc Subcase 1}, we can prove that $(A(K),B_{Y},\{F\})$ satisfies property-$(P_{1})$.
					
					{\sc Case 2:} $S(\mu_{1}) \cap S(\mu_{2}) \neq \es$.
					
					Without loss of generality, for simplicity, we assume that $S(\mu_{1}) \cap S(\mu_{2}) = \{k_{1},\ldots,k_{s}\}$, where $k_{i}=t_{i}$, for $1 \leq i \leq s \leq \min\{m,r\}$. Let $B$ and $\al^{\prime}$ be defined as in the proof of {\sc Case 2} of Theorem~\ref{thm2}. We further proceed the same way as in {\sc Case 1} to prove that $(A(K),B_{Y},\{F\})$ satisfies property-$(P_{1})$.
				\end{proof}
				
				The following result is an easy consequence of \cite[Theorem~5.3]{CS}, Theorem~\ref{thm3}, Proposition~\ref{prop0.2} and \cite[Theorem~2.6]{C}.
				\begin{thm}\label{thm3.1}
					Let $K$ be a Choquet simplex; $\{\mu_{1},\ldots,\mu_{n}\} \ci A(K)^{\ast}$ be such that for each $i=1,\ldots,n$, $S(\mu_i) \ci ext(K)$ and $Y = \bigcap_{i=1}^{n} ker(\mu_i)$. Then the following are equivalent:
					\blr
					\item $Y$ is strongly proximinal in $A(K)$.
					\item $Y$ is strongly ball proximinal in $A(K)$.
					\item $(A(K),Y,\mc{K}(A(K)))$ has property-$(P_{1})$.
					\item $(A(K),B_{Y},\mc{K}(A(K)))$ has property-$(P_{1})$.
					\item $Y^{\perp} \ci \{\mu \in A(K)^{\ast}: \mu\mbox{ is an SSD-point of }A(K)^{\ast}\}$.
					\el
				\end{thm}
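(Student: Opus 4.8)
The plan is to close a cycle of implications among $(i)$--$(v)$ by assembling the four quoted ingredients, in direct parallel with the proof of Theorem~\ref{thm2.1}. Under the standing hypothesis that $S(\mu_i) \ci ext(K)$ for every $i$, I would first invoke \cite[Theorem~5.3]{CS} to obtain the block of equivalences $(i) \Leftrightarrow (ii) \Leftrightarrow (v)$; this settles strong proximinality, strong ball proximinality and the SSD-point characterization all at once, so that only the two property-$(P_1)$ statements $(iii)$ and $(iv)$ remain to be hooked onto this block.

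Next come the two cheap links. For $(iv) \Rightarrow (iii)$ I would apply Proposition~\ref{prop0.2}$(iii)$ with $\msc{F} = \mc{K}(A(K))$, which is precisely the statement that property-$(P_1)$ of $(A(K),B_Y,\mc{K}(A(K)))$ descends to $(A(K),Y,\mc{K}(A(K)))$. For $(iii) \Rightarrow (i)$ I would use that $\mc{K}(A(K))$ contains all singletons: restricting property-$(P_1)$ of $(A(K),Y,\mc{K}(A(K)))$ to one-point sets is exactly strong proximinality of $Y$ at each point of $A(K)$, as noted immediately after Definition~\ref{Def0}.

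The one step with genuine content is $(v) \Rightarrow (iv)$, and here Theorem~\ref{thm3} and \cite[Theorem~2.6]{C} do the work. The issue to overcome is a mismatch of hypotheses: Theorem~\ref{thm3} additionally requires each $S(\mu_i)$ to be \emph{finite}, whereas $(v)$ only tells us that every $\mu_i \in Y^{\perp}$ is an SSD-point of $A(K)^{\ast}$ (the containment $S(\mu_i) \ci ext(K)$ being already assumed). My plan is to use \cite[Theorem~2.6]{C} as the bridge: for a finite co-dimensional subspace of an $L_1$-predual space, the SSD-point condition on the defining measures forces them to be finitely supported. Thus $(v)$ upgrades to the assertion that each $S(\mu_i)$ is finite and contained in $ext(K)$, which is exactly the hypothesis of Theorem~\ref{thm3}; that theorem then yields $(iv)$. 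Combining, $(v) \Rightarrow (iv) \Rightarrow (iii) \Rightarrow (i) \Leftrightarrow (ii) \Leftrightarrow (v)$ closes the cycle and gives the full equivalence.

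I expect the finiteness reduction inside $(v) \Rightarrow (iv)$ to be the main obstacle: one must be certain that the SSD-point information in $(v)$ genuinely collapses each support $S(\mu_i)$ to a finite set, so that Theorem~\ref{thm3}'s finiteness requirement is furnished automatically by hypothesis $(v)$ rather than assumed afresh. Once that reduction is justified via \cite[Theorem~2.6]{C}, the remaining implications are routine applications of results already in hand.
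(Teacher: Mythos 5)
Your architecture is exactly the paper's: the stated proof assembles precisely the four ingredients you name (\cite[Theorem~5.3]{CS}, Theorem~\ref{thm3}, Proposition~\ref{prop0.2} and \cite[Theorem~2.6]{C}) into the cycle $(v)\Rightarrow(iv)\Rightarrow(iii)\Rightarrow(i)\Leftrightarrow(ii)\Leftrightarrow(v)$, and your handling of $(iv)\Rightarrow(iii)$ via Proposition~\ref{prop0.2}$(iii)$ and of $(iii)\Rightarrow(i)$ via singletons is exactly right.

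The one step you correctly single out as the crux --- upgrading $(v)$ to ``each $S(\mu_i)$ is finite'' so that Theorem~\ref{thm3} applies --- is, however, pinned on the wrong result. \cite[Theorem~2.6]{C} is the general $L_1$-predual equivalence of strong proximinality, strong ball proximinality and the SSD-point condition (this is exactly how it is used in Theorem~\ref{thm5}); it makes no assertion about supports of the defining functionals, and in an abstract $L_1$-predual the support of an element of $X^{\ast}$ is not even defined. So as written, your bridge ``SSD-point condition forces finite support, by \cite[Theorem~2.6]{C}'' does not follow from the cited theorem. The finiteness has to come from the $A(K)$-specific characterization, namely \cite[Theorem~5.3]{CS}, which under the standing hypothesis $S(\mu_i)\ci ext(K)$ ties strong (ball) proximinality of $Y$ to finiteness of the supports --- exactly parallel to the way Theorem~\ref{thm2.1} invokes \cite[Theorem~2.1]{SD} to supply the finiteness hypothesis of Theorem~\ref{thm2} in the $C(S)$ case. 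Once you reroute that one citation (using \cite[Theorem~5.3]{CS} both for the block $(i)\Leftrightarrow(ii)\Leftrightarrow(v)$ and for the finiteness reduction, with \cite[Theorem~2.6]{C} available for the SSD equivalence), the proof closes as you describe.
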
 
				
				
				\section{Characterization of strongly proximinal finite co-dimensional subspaces of $L_{1}$-predual spaces in terms of property-$(P_{1})$}\label{sec4}
				In this section, our main aim is to generalize the characterization in Theorem~\ref{thm2.1} for the strongly proximinal finite co-dimensional subspaces of an $L_{1}$-predual space. To this end, we need few technical lemmas.
				
				For a Banach space $X$, the Hausdorff metric, denoted by $d_{H}$, on $\mc{CB}(X)$ is defined as follows: for each $B_1, B_2 \in \mc{CB}(X)$, \[d_{H}(B_1,B_2) = \inf \{a>0: B_1 \ci B_2 + a B(0,1)\mbox{ and }B_2 \ci B_1 + aB(0,1)\}.\] The following lemma is proved in \cite[Theorem~2.5]{ST}. We include the proof here for the sake of completeness. 
				\begin{lem}\label{lem3}
					Let $V$ be a non-empty closed convex subset of a Banach space~$X$ and $F_{1},F_{2} \in \mc{CB}(X)$. Then for each $v \in V$, $|r(v,F_{1})-r(v,F_{2})| \leq d_{H}(F_{1},F_{2})$ and $|\textrm{rad}_{V}(F_{1})-\textrm{rad}_{V}(F_{2})| \leq d_{H}(F_{1},F_{2}).$
				\end{lem}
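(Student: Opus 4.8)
The plan is to prove the two inequalities by unwinding the definitions of $r(\cdot,\cdot)$ and $\mathrm{rad}_V(\cdot)$ as suprema and infima, and then applying the triangle-inequality-like behaviour of the Hausdorff metric $d_H$. Set $\eta = d_H(F_1,F_2)$. The intuition is that every point of $F_1$ is within $\eta$ (or arbitrarily close to $\eta$) of some point of $F_2$ and vice versa, so the "farthest-point" functionals $r(v,\cdot)$ cannot differ by more than $\eta$; the radius inequality then follows because taking an infimum over $v \in V$ is $1$-Lipschitz with respect to the sup-norm on the functionals.

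For the first inequality, fix $v \in V$ and let $a > \eta$ be arbitrary, so that $F_1 \subseteq F_2 + a B(0,1)$ and $F_2 \subseteq F_1 + aB(0,1)$. Given any $b_1 \in F_1$, there is $b_2 \in F_2$ with $\|b_1 - b_2\| \le a$ (using that $b_1 \in F_2 + aB(0,1)$), whence $\|v - b_1\| \le \|v - b_2\| + a \le r(v,F_2) + a$. Taking the supremum over $b_1 \in F_1$ gives $r(v,F_1) \le r(v,F_2) + a$. By symmetry, $r(v,F_2) \le r(v,F_1) + a$, so $|r(v,F_1) - r(v,F_2)| \le a$. Since $a > \eta$ was arbitrary, letting $a \downarrow \eta$ yields $|r(v,F_1) - r(v,F_2)| \le \eta = d_H(F_1,F_2)$, which is the first claim. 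One small technical point to handle carefully is whether the inclusion $F_1 \subseteq F_2 + aB(0,1)$ guarantees, for each $b_1$, an \emph{attaining} $b_2$; since the inclusion only gives $b_1 = b_2 + a u$ with $\|u\| \le 1$ for some $b_2 \in F_2$, this is immediate and no compactness is needed.

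For the radius inequality, I would transfer the pointwise bound just obtained. From $r(v,F_1) \le r(v,F_2) + \eta$ for every $v \in V$, take the infimum over $v \in V$ on both sides: $\mathrm{rad}_V(F_1) = \inf_{v} r(v,F_1) \le \inf_{v}\big(r(v,F_2) + \eta\big) = \mathrm{rad}_V(F_2) + \eta$. By the symmetric bound, $\mathrm{rad}_V(F_2) \le \mathrm{rad}_V(F_1) + \eta$, and combining gives $|\mathrm{rad}_V(F_1) - \mathrm{rad}_V(F_2)| \le \eta = d_H(F_1,F_2)$, as required.

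I expect no serious obstacle here: the argument is entirely a matter of bookkeeping with sups and infs, and the only point requiring a touch of care is the $a \downarrow \eta$ limiting step (one cannot in general take $a = \eta$ in the Hausdorff-metric inclusions, since the infimum defining $d_H$ need not be attained), which is why I pass to arbitrary $a > \eta$ first and then let $a$ decrease to $\eta$. The second inequality is then a purely formal consequence of the first via monotonicity of the infimum, so the real content is the single pointwise estimate.
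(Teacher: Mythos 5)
Your proposal is correct and follows essentially the same route as the paper: a pointwise triangle-inequality estimate $r(v,F_1)\le r(v,F_2)+d_H(F_1,F_2)$ obtained by approximating each point of $F_1$ by a point of $F_2$ within distance slightly exceeding $d_H(F_1,F_2)$ (your $a\downarrow\eta$ plays the role of the paper's $\e>0$), followed by symmetry and taking the infimum over $v\in V$. There is no substantive difference.
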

				\begin{proof}
					Let $v \in V$. Now, let $y \in F_{1}$ and $\e>0$. Choose $z \in F_{2}$ such that $\|y-z\|<d_{H}(F_{1},F_{2})+\e$. Then \[\|v-y\| \leq \|v-z\|+\|z-y\|<r(v,F_{2})+d_{H}(F_{1},F_{2})+\e.\] It follows that 
					\begin{equation}\label{eqnlem3.0}
						r(v,F_{1}) \leq r(v,F_{2})+d_{H}(F_{1},F_{2}).
					\end{equation} 
					Further, after swapping $F_{1}$ with $F_{2}$ in the above argument, we obtain the following inequality. 
					\begin{equation}\label{eqnlem3.1}
						r(v,F_{2}) \leq r(v,F_{1})+d_{H}(F_{1},F_{2}).
					\end{equation} 
					The first conclusion of the result follows from the inequalities in $(\ref{eqnlem3.0})$ and $(\ref{eqnlem3.1})$.
					
					The inequalities in $(\ref{eqnlem3.0})$ and $(\ref{eqnlem3.1})$ hold true for every $v \in V$ and hence, the final conclusion of the result follows. 
				\end{proof}
				
				\begin{lem}\label{lem2}
					Let $Y$ be a subspace of a Banach space. Then for each $F \in \mc{K}(X)$, $\textrm{rad}_{B_{Y^{\perp \perp}}}(F) = \textrm{rad}_{B_{Y}}(F)$.  	
				\end{lem}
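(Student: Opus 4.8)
The plan is to exploit the classical identification of $B_{Y^{\perp\perp}}$ with the weak$^*$-closure of $B_Y$ in $X^{\ast\ast}$ and then to compare the two radii using the isometric canonical embedding together with the principle of local reflexivity. Throughout I identify $X$ with its canonical image $J_X(X) \subseteq X^{\ast\ast}$; since $J_X$ is an isometry, for $b \in F$ and any $v \in X^{\ast\ast}$ the quantity $\|v-b\|$ is unambiguous, and by Goldstine's theorem applied to $Y$ (using that $\sigma(X^{\ast\ast},X^\ast)$ restricts to the weak$^*$ topology of $Y^{\ast\ast}\cong Y^{\perp\perp}$) one has $B_{Y^{\perp\perp}} = \overline{B_Y}^{\,w^\ast}$. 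The inequality $\textrm{rad}_{B_{Y^{\perp\perp}}}(F) \le \textrm{rad}_{B_{Y}}(F)$ is then immediate: each $y \in B_Y$ satisfies $y \in B_{Y^{\perp\perp}}$ and $r(y,F)$ is computed by the same norm, so the infimum defining $\textrm{rad}_{B_{Y^{\perp\perp}}}(F)$ runs over a larger set.

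For the reverse inequality, write $R' = \textrm{rad}_{B_{Y^{\perp\perp}}}(F)$. The set $B_{Y^{\perp\perp}}$ is weak$^*$-compact, and $v \mapsto r(v,F) = \sup_{b \in F}\|v-b\|$ is weak$^*$-lower semicontinuous (a supremum of weak$^*$-lsc functions), so the infimum is attained at some $v_0 \in B_{Y^{\perp\perp}}$ with $\|v_0 - b\| \le R'$ for all $b \in F$. Fix $\e>0$. Using compactness of $F$, choose a finite $\e$-net $\{b_1,\dots,b_k\} \subseteq F$ and set $E = \textrm{span}\{v_0,b_1,\dots,b_k\} \subseteq X^{\ast\ast}$, a finite-dimensional subspace. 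I then apply the version of the principle of local reflexivity that respects the subspace $Y$: there is a linear map $T : E \to X$ with $\|T\| \le 1+\e$, with $Tb_i = b_i$ for each $i$ (as $b_i \in E \cap X$), and with $T(E \cap Y^{\perp\perp}) \subseteq Y$, whence $y_0 := Tv_0 \in Y$.

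To finish, note $\|y_0\| = \|Tv_0\| \le (1+\e)\|v_0\| \le 1+\e$ and $\|y_0 - b_i\| = \|T(v_0-b_i)\| \le (1+\e)R'$. Putting $y := (1-\e)y_0$ gives $\|y\| \le (1-\e)(1+\e) \le 1$, so $y \in B_Y$, while for an arbitrary $b \in F$, choosing a nearest net point $b_i$, $\|y-b\| \le \|y-y_0\| + \|y_0 - b_i\| + \|b_i - b\| \le \e(1+\e) + (1+\e)R' + \e$. Hence $r(y,F) \le (1+\e)R' + \e(1+\e) + \e$, so $\textrm{rad}_{B_Y}(F)$ is bounded by this quantity, which tends to $R'$ as $\e \to 0$. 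Therefore $\textrm{rad}_{B_Y}(F) \le R'$, and combined with the first inequality this gives the claimed equality.

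The main obstacle is precisely this reverse inequality: weak$^*$-convergence of a net in $B_Y$ to $v_0$ only yields lower semicontinuity of $r(\cdot,F)$, i.e.\ it controls the radius from below rather than from above, so $v_0$ cannot be pulled back into $B_Y$ by Goldstine alone. The essential observation is that, after replacing $F$ by a finite net (which is legitimate either directly, as above, or through the Hausdorff-metric continuity of $\textrm{rad}$ established in Lemma~\ref{lem3}), the radius constraints form a finite-dimensional configuration, exactly the setting the principle of local reflexivity transfers almost isometrically from $X^{\ast\ast}$ back into $X$; the subspace-respecting refinement is what guarantees that the transferred point lands in $Y$ and not merely in $X$.
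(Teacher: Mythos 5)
Your proposal is correct and follows essentially the same route as the paper: reduce to a finite $\e$-net of $F$ and use the subspace-respecting principle of local reflexivity to transfer an (almost) optimal center from $B_{Y^{\perp\perp}}$ back into $B_Y$ with controlled loss. The only differences are cosmetic --- the paper argues by contradiction with an approximate minimizer and passes from finite to compact sets via the Hausdorff-metric estimate of Lemma~\ref{lem3}, whereas you attain the minimizer by weak$^*$-compactness and fold the net into the estimate directly.
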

				\begin{proof}
					First we prove the result for each set in $\mc{F}(X)$. Let $F = \{x_1,\ldots,x_n\} \in \mc{F}(X)$. Clearly, $\emph{rad}_{B_{Y^{\perp \perp}}}(F) \leq \emph{rad}_{B_Y}(F)$. Suppose $\emph{rad}_{B_{Y^{\perp \perp}}}(F) < \emph{rad}_{B_Y}(F)$. Let us choose $\e>0$ and $\Phi \in B_{Y^{\perp \perp}}$ such that $r(\Phi, F)<\emph{rad}_{B_Y}(F)-\e$. Now, choose $0< \e^\prime < \frac{\e}{1+r(\Phi,F)}$ and define $E=span \{x_1,\ldots,x_n, \Phi\} \ci X^{\ast \ast}$. Then by the extended version of principle of local reflexivity in \cite[Theorem~3.2]{B}, there exists a bounded linear map~$T : E \rightarrow X$ such that $T(x_i)=x_i$, for each $i=1,\ldots,n$; $T(\Phi) \in Y$ and $\|T\| \leq 1+\e^{\prime}$. Let $y = \frac{T(\Phi)}{1+\e^{\prime}} \in B_Y$. Then for each $i=1,\ldots,n$, 
					\begin{equation}
						\begin{split}
							\|x_i-y\| &\leq \|T(x_i) - T(\Phi)\| + \left\|T(\Phi) - \frac{T(\Phi)}{1+\e^{\prime}}\right\| \\ &\leq (1+\e^\prime)\|x_i - \Phi\| + \e^{\prime}\\ &\leq r(\Phi,F)  +\e^\prime (1+r(\Phi,F))\\& < r(\Phi,F) + \e.\\
						\end{split}
					\end{equation} 
					It follows that $r(y,F) \leq r(\Phi,F) + \e$. Now, from the inequalities $\emph{rad}_{B_Y}(F) \leq r(y,F)$ and $r(\Phi,F) < \emph{rad}_{B_{Y}}(F)-\e$, it follows $\emph{rad}_{B_{Y}}(F) < \emph{rad}_{B_{Y}}(F)$, which is a contradiction. Therefore, $\emph{rad}_{B_{Y^{\perp \perp}}}(F) = \emph{rad}_{B_{Y}}(F)$.  	
					
					Now, for a set $F \in \mc{K}(X)$, it follows from Lemma~\ref{lem3}; the fact that for each $\e>0$, there exists a finite $\e$-net $F_{\e}$ such that $d_{H}(F_{\e},F)<\e$ and the first part of the proof that $\emph{rad}_{B_{Y^{\perp \perp}}}(F) = \emph{rad}_{B_{Y}}(F)$.  	
				\end{proof}	
				
				\begin{lem}\label{lem4}
					Let $Y$ be a subspace of a Banach space~$X$. If $(X^{\ast \ast},B_{Y^{\perp \perp}},\mc{K}(X))$ has property-$(P_{1})$, then for each $F \in \mc{K}(X)$ and $y \in Y$, $d(y,\textrm{cent}_{B_{Y^{\perp \perp}}}(F)) = d(y,\textrm{cent}_{B_{Y}}(F))$.
				\end{lem}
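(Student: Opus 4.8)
The plan is to prove the two inequalities separately, the non-trivial one resting on the extended principle of local reflexivity \cite{B} together with the hypothesised property-$(P_1)$ of $B_{Y^{\perp \perp}}$. First I would record the structural fact underlying everything: since $F \subseteq X \subseteq X^{\ast\ast}$ and the embedding is isometric, $r(\cdot,F)$ is given by the same formula on $Y$ and on $Y^{\perp \perp}$, and by Lemma~\ref{lem2} the two radii coincide, say $R := \textrm{rad}_{B_{Y^{\perp \perp}}}(F) = \textrm{rad}_{B_Y}(F)$. Writing $C := \textrm{cent}_{B_{Y^{\perp \perp}}}(F)$, this yields the identification $\textrm{cent}_{B_Y}(F) = C \cap X$: indeed $Y = Y^{\perp \perp} \cap X$ since $Y$ is closed, and a point of $B_Y$ has radius $R$ precisely when it lies in $C$. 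As $C \cap X \subseteq C$, for every $y \in Y$ we get at once $d(y,C) \leq d(y,\textrm{cent}_{B_Y}(F))$, that is, $d(y,\textrm{cent}_{B_{Y^{\perp \perp}}}(F)) \leq d(y,\textrm{cent}_{B_Y}(F))$. The entire content lies in the reverse inequality $d(y,\textrm{cent}_{B_Y}(F)) \leq d(y,C)$.

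For the reverse inequality the natural first move is to take a near-optimal $\Phi \in C$ with $\|y-\Phi\| \leq d(y,C)+\e$ and push it into $Y$ by the extended principle of local reflexivity exactly as in the proof of Lemma~\ref{lem2}: applied to $E = \textrm{span}(F \cup \{\Phi,y\})$ it produces an operator $T : E \to X$ with $\|T\| \leq 1+\rho$ fixing each point of $F$ and fixing $y$, with $T(\Phi) \in Y$, so that $z := T(\Phi)/(1+\rho)$ lies in $B_Y$, satisfies $r(z,F) \leq R+\rho(R+1)$, and obeys $\|y-z\| \leq (1+\rho)\|y-\Phi\|+\rho$. Thus $z$ is an approximate centre lying close to $y$. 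The difficulty is that $T(\Phi)$ need not be near $\Phi$ in norm (it is a point of $X$ while $\Phi \in X^{\ast\ast}$), so this step only ever produces points of $\textrm{cent}_{B_Y}(F,\de)$ with $\de>0$, never of $\textrm{cent}_{B_Y}(F)$ itself; equivalently, a bare weak$^\ast$-limit argument would land in $Y^{\perp \perp}$ rather than in $Y$.

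The main obstacle is therefore to upgrade these approximate centres to an exact one in $B_Y$, and this is where I would spend the hypothesis. The key is a self-improvement estimate obtained by feeding the output $z \in \textrm{cent}_{B_Y}(F,\de)$ back through property-$(P_1)$ of $(X^{\ast\ast},B_{Y^{\perp \perp}},\{F\})$ and then re-applying \cite{B}: once $\de \leq \de(\e_1,F)$, property-$(P_1)$ supplies $\Phi \in C$ with $\|z-\Phi\| \leq \e_1$, and then \cite{B} applied to $\textrm{span}(F \cup \{\Phi,z\})$ now fixes $z \in X$ and returns $z' \in B_Y$ with $\|z-z'\| \leq (1+\rho)\e_1+\rho$ and $r(z',F) \leq R+\rho(R+1)$. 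Iterating with a summable family of tolerances $\sum_k \e_k \leq \e$ and radii $\de_k \downarrow 0$ produces a Cauchy sequence $\{z_k\} \subseteq B_Y$; since $Y$ is complete its limit $z_\infty \in B_Y$ satisfies $r(z_\infty,F) \leq R$, hence $z_\infty \in \textrm{cent}_{B_Y}(F)$, with $\|z_0-z_\infty\| \leq \e$. This simultaneously shows $\textrm{cent}_{B_Y}(F) \neq \es$ and that $(X,B_Y,\{F\})$ has property-$(P_1)$, completeness of $Y$ being exactly what converts the self-improvement into an exact centre.

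Finally I would splice the two pieces together: starting from the near-optimal $\Phi \in C$ and its transferred approximate centre $z \in \textrm{cent}_{B_Y}(F,\de)$ with $\|y-z\| \leq (1+\rho)(d(y,C)+\e)+\rho$, the property-$(P_1)$ estimate for $B_Y$ just established provides $z' \in \textrm{cent}_{B_Y}(F)$ with $\|z-z'\| \leq \e$, whence $d(y,\textrm{cent}_{B_Y}(F)) \leq \|y-z'\| \leq (1+\rho)(d(y,C)+\e)+\rho+\e$. Letting $\rho,\e \to 0$ gives $d(y,\textrm{cent}_{B_Y}(F)) \leq d(y,C)$, and combined with the easy inequality this is the asserted equality. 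I expect the genuinely delicate point to be the bookkeeping in the iteration — ordering the choices of the tolerances $\e_k$, the local-reflexivity parameters $\rho_k$ and the radii $\de_k$ so that each invocation of property-$(P_1)$ is legitimate (i.e. $\de_{k+1} \leq \de(\e_{k+1}',F)$) while $\sum_k \|z_k-z_{k+1}\|$ stays below $\e$ — rather than any single analytic estimate.
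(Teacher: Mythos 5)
Your overall architecture — the easy inclusion $\textrm{cent}_{B_Y}(F) \ci \textrm{cent}_{B_{Y^{\perp\perp}}}(F)$ for one inequality, and for the other a loop of (local reflexivity to push a near-center of $B_{Y^{\perp\perp}}$ into $B_Y$) $\to$ (property-$(P_1)$ to recapture a point of $\textrm{cent}_{B_{Y^{\perp\perp}}}(F)$ nearby) $\to$ (repeat with summable tolerances and pass to the limit in the complete set $B_Y$) — is exactly the paper's strategy, and your bookkeeping of $\e_k$, $\rho_k$, $\de_k$ would close correctly. But there is one genuine gap: you apply the extended principle of local reflexivity of \cite{B} to $E = \textrm{span}(F \cup \{\Phi,y\})$ with $F$ an arbitrary compact set. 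That theorem produces the operator $T$ only for \emph{finite-dimensional} subspaces $E \ci X^{\ast\ast}$, and $\textrm{span}(F)$ is infinite-dimensional for a typical infinite compact $F$. So the operator you rely on, fixing every point of $F$, need not exist. Note also that you cannot simply reduce the lemma to the case of finite $F$, because $d(y,\textrm{cent}_{B_Y}(F))$ is not continuous in $F$ for the Hausdorff metric (centers can jump), unlike the radius function used in Lemma~\ref{lem2} to pass from $\mc{F}(X)$ to $\mc{K}(X)$.

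The paper's repair is to insert a second, inner iteration: fix an increasing sequence of finite $\frac{\de}{2^{m+2}}$-nets $F_m \ci F$, apply local reflexivity at stage $m$ only to the finite-dimensional space $\textrm{span}(F_m \cup \{\Phi_{m-1}, y_{m-1}\})$, and control $r(\cdot,F)$ through $r(\cdot,F_m)$ by Lemma~\ref{lem3}, absorbing the accumulated net errors $\sum_m \frac{\de}{2^{m+1}}$ into the tolerance $\de$ before each invocation of property-$(P_1)$. Each single step of your outer self-improvement loop therefore costs an entire Cauchy-sequence argument of its own, and interleaving the two limits (refinement of the nets versus shrinkage of the $\de_k$) is the main technical content of the paper's proof. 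Your proposal captures the outer loop correctly but is missing this inner layer, without which the key transfer step from $B_{Y^{\perp\perp}}$ to $B_Y$ is not justified for non-finite compact $F$.
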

				\begin{proof}
					We follow the proof technique of \cite[Lemma~2.2]{C}. Let $F \in \mc{K}(X)$ and $y \in Y$. Define $r = d(y,\emph{cent}_{B_{Y^{\perp \perp}}}(F))$ and $r^{\prime} = \emph{rad}_{B_{Y}}(F)$. By Lemma~\ref{lem2}, $r^{\prime} = \emph{rad}_{B_{Y^{\perp \perp}}}(F)$ and hence, for each $\de>0$, $\emph{cent}_{B_{Y}}(F,\de) \ci \emph{cent}_{B_{Y^{\perp \perp}}}(F,\de)$. Therefore, by our assumption, for each $\e>0$, there exists $\de_{\e}>0$ such that $d(v,\emph{cent}_{B_{Y^{\perp \perp}}}(F))<\e$, whenever $v \in \emph{cent}_{B_{Y}}(F,\de_{\e})$.
					
					Now, let $\e>0$ be fixed. 
					
					Let us choose $0<\be < \frac{\e}{3}$ and define $\de=\de_{\frac{\e}{2^{2}}}$. For each $m\in \mb{N}$, let $F_{m} \ci F$ be finite $\frac{\de}{2^{m+2}}$-net such that $F_{m} \ci F_{m+1}$ and define $r^{\prime}_{m}=\emph{rad}_{B_{Y}}(F_{m})$. By Lemma~\ref{lem2}, $r^{\prime}_{m}=\emph{rad}_{B_{Y^{\perp \perp}}}(F_{m})$. Therefore, by Lemma~\ref{lem1}, for each $m \in \mb{N}$ and $\e^{\prime}>0$, there exists $0<\ga^{m}_{\e^{\prime}}< \frac{\de}{2}$ such that $d(v,\emph{cent}_{B_{Y^{\perp \perp}}}(F_{m},\sum_{k=1}^{m} \frac{\de}{2^{k+1}}))< \e^{\prime}$, whenever $v \in \emph{cent}_{B_{Y}}(F_{m},\sum_{k=1}^{m} \frac{\de}{2^{k+1}} + \ga^{m}_{\e^{\prime}})$. 
					
					Now, since $\emph{cent}_{B_{Y^{\perp \perp}}}(F_{1},\frac{\de}{2^2})$ is $w^{\ast}$-compact, it is proximinal and hence there exists $\Phi_{0} \in \emph{cent}_{B_{Y^{\perp \perp}}}(F_{1},\frac{\de}{2^2})$ such that $d(y,\emph{cent}_{B_{Y^{\perp \perp}}}(F_{1},\frac{\de}{2^2}))=\|y-\Phi_{0}\|$. Define $r_{0} = d(y,\emph{cent}_{B_{Y^{\perp \perp}}}(F_{1},\frac{\de}{2^2}))$.
					It is easy to see that $\emph{cent}_{B_{Y^{\perp \perp}}}(F) \ci \emph{cent}_{B_{Y^{\perp \perp}}}(F_{1},\frac{\de}{2^2})$. Indeed, it follows from Lemma~\ref{lem3} that $r^\prime \leq r^{\prime}_{1}+ \frac{\de}{2^{3}}$ and hence, for $\Phi^\prime \in \emph{cent}_{B_{Y^{\perp \perp}}}(F)$, $r(\Phi^\prime,F_{1})\leq r(\Phi^\prime,F)=r^\prime \leq r^{\prime}_{1}+ \frac{\de}{2^{3}}<r^{\prime}_{1} + \frac{\de}{2^2}.$ Therefore, it follows that $r_{0} \leq r$.
					
					Choose $0<\e_{1}< \min\left\{\frac{3\be}{2^2 (r_{0}+1)}, \frac{\ga^{1}_{\frac{\be}{2^2}}}{1+r^{\prime}_{1}+\frac{\de}{2^{2}}}\right\}$. Let $E_{1}=span \{F_{1}\cup \{y,\Phi_{0}\}\} \ci X^{\ast \ast}$. Then by the extended version of principle of local reflexivity in \cite[Theorem~3.2]{B}, there exists a bounded linear map~$T_{1} : E_{1} \rightarrow X$ such that $T_{1}(x)=x$, for each $x \in F_{1}$; $T_{1}(y) = y$; $T_{1}(\Phi_{0}) \in Y$ and $\|T_{1}\| \leq 1+\e_{1}$. Now, let $y_{1} = \frac{T_{1}(\Phi_{0})}{1+\e_{1}} \in B_{Y}$. Then 
					\begin{equation}
						\begin{split}
							\|y-y_{1}\| &\leq \|T_{1}(y) - T_{1}(\Phi_{0})\| + \left\|T_{1}(\Phi_{0}) -\frac{T_{1}(\Phi_{0})}{1+\e_{1}}\right\|\\ &\leq (1+\e_{1})r_{0} + \e_{1} \\&\leq  r + \e_{1} (1+r_{0})\\ &< r+ \frac{3\be}{2^2}.\\ 
						\end{split}
					\end{equation}
					Also, for each $x \in F_{1}$,
					\begin{equation}
						\begin{split}
							\|x-y_{1}\| &\leq \|T_{1}(x) - T_{1}(\Phi_{0})\| + \left\|T_{1}(\Phi_{0}) -\frac{T_{1}(\Phi_{0})}{1+\e_{1}}\right\| \\ &\leq (1+\e_{1})r(\Phi_{0},F_{1})  + \e_{1}\\ &\leq r^{\prime}_{1} +\frac{\de}{2^{2}} + \e_{1}\left(1+r^{\prime}_{1}+\frac{\de}{2^{2}}\right) \\&< r^{\prime}_{1} + \frac{\de}{2^{2}} + \ga^{1}_{\frac{\be}{2^2}}.\\
						\end{split}
					\end{equation}
					It follows that $r(y_{1},F_{1}) \leq r^{\prime}_{1} + \frac{\de}{2^2} + \ga^{1}_{\frac{\be}{2^2}}.$ Thus, $y_{1} \in \emph{cent}_{B_{Y}}(F_{1},\frac{\de}{2^2} + \ga^{1}_{\frac{\be}{2^2}})$. This implies $d(y_{1},\emph{cent}_{B_{Y^{\perp \perp}}}(F_{1},\frac{\de}{2^2}))< \frac{\be}{2^2}$. Now, let $\Phi_{1} \in \emph{cent}_{B_{Y^{\perp \perp}}}(F_{1},\frac{\de}{2^2})$ such that $\|y_{1} -\Phi_{1}\|< \frac{\be}{2^2}$.
					
					Let us make the following observation. Let $x \in F_{2}$. Then there exists $x_{1} \in F_{1}$ such that $\|x-x_{1}\|< \frac{\de}{2^3}$ and hence,
					\begin{equation}
						\begin{split}
							\|x-\Phi_{1}\| &\leq \|x-x_{1}\|+\|x_{1}-\Phi_{1}\| \\ &< \frac{\de}{2^3} + r(\Phi_{1},F_{1})\\ &\leq  \frac{\de}{2^3} + r^{\prime}_{1} + \frac{\de}{2^2}\\ &\leq r^{\prime}_{2} + \frac{\de}{2^2} +\frac{\de}{2^3}.\\
						\end{split}
					\end{equation} 
					It follows that $r(\Phi_{1},F_{2}) \leq r^{\prime}_{2}  + \frac{\de}{2^2} + \frac{\de}{2^3}$. 
					
					Choose $0<\e_{2}< \min \left\{\frac{\be}{2^{3}(1+\frac{\be}{2^2})}, \frac{\ga^{2}_{\frac{\be}{2^3}}}{1+r^{\prime}_{2}+ \frac{\de}{2^2} + \frac{\de}{2^3}} \right\}$. Let $E_{2} = span \{F_{2} \cup \{\Phi_{1},y_{1}\}\} \ci X^{\ast \ast}$. Then, again by principle of local reflexivity, there exists a bounded linear map~$T_{2}: E_{2} \rightarrow X$ such that $T_{2}(x)=x$, for each $x \in F_{2}$; $T_{2}(y_{1}) =y_{1}$; $T_{2}(\Phi_{1}) \in Y$ and $\|T_{2}\| \leq 1+ \e_{2}$. Now, let $y_{2} = \frac{T_{2}(\Phi_{1})}{1+\e_{2}} \in B_{Y}$. Then 
					\begin{equation}
						\begin{split}
							\|y_{1}-y_{2}\| &\leq \|T_{2}(y_{1}) - T_{2}(\Phi_{1})\| + \left\|T_{2}(\Phi_{1}) -\frac{T_{2}(\Phi_{1})}{1+\e_{2}}\right\|\\ &< (1+\e_{2})\frac{\be}{2^2} + \e_{2} \\&= \frac{\be}{2^2} + \e_{2} \left(1+\frac{\be}{2^2}\right)\\ &< \frac{\be}{2^2}+ \frac{\be}{2^3}= \frac{3 \be}{2^3}.\\  
						\end{split}
					\end{equation}
					Also, for each $x \in F_{2}$,
					\begin{equation}
						\begin{split}
							\|x-y_{2}\| &\leq \|T_{2}(x) - T_{2}(\Phi_{1})\| + \left\|T_{2}(\Phi_{1}) -\frac{T_{2}(\Phi_{1})}{1+\e_{2}}\right\| \\ &\leq (1+\e_{2})r(\Phi_{1},F_{2})  + \e_{2}\\&\leq r^{\prime}_{2}  + \frac{\de}{2^2} + \frac{\de}{2^3} + \e_{2}\left(1+r^{\prime}_{2}  + \frac{\de}{2^2} + \frac{\de}{2^3}\right) \\&< r^{\prime}_{2}  + \frac{\de}{2^2} + \frac{\de}{2^3} + \ga^{2}_{\frac{\be}{2^3}}.\\
						\end{split}
					\end{equation}
					It follows that $r(y_{2},F_{2}) \leq r^{\prime}_{2}  + \frac{\de}{2^2} + \frac{\de}{2^3}+ \ga^{2}_{\frac{\be}{2^3}}.$ Thus, $y_{2} \in \emph{cent}_{B_{Y}}(F_{2},\frac{\de}{2^2} + \frac{\de}{2^3}+ \ga^{2}_{\frac{\be}{2^3}})$. This implies $d(y_{2},\emph{cent}_{B_{Y^{\perp \perp}}}(F_{2},\frac{\de}{2^2} + \frac{\de}{2^3}))< \frac{\be}{2^3}$. Now, let $\Phi_{2} \in \emph{cent}_{B_{Y^{\perp \perp}}}(F_{2},\frac{\de}{2^2} + \frac{\de}{2^3})$ such that $\|y_{2} -\Phi_{2}\|< \frac{\be}{2^3}$. 
					Similar to the earlier observation, we can conclude that $r(\Phi_{2},F_{3}) \leq r^{\prime}_{3}  + \frac{\de}{2^2} + \frac{\de}{2^3}+\frac{\de}{2^4}.$
					
					Proceeding inductively, we get a sequence~$\{y_{n}\} \ci B_{Y}$ such that $\|y_{n}-y_{n+1}\| < \frac{3 \be}{2^{n+2}}$ and $r(y_{n},F_{n}) \leq r^{\prime}_{n} + \sum_{k=1}^{n} \frac{\de}{2^{k+1}} + \ga^{n}_{\frac{\be}{2^{n+1}}}<r^{\prime} + \sum_{k=1}^{n} \frac{\de}{2^{k+1}} +\frac{\de}{2}.$ Clearly, $\{y_{n}\}$ is Cauchy in $B_{Y}$ and hence, let $z_{1} \in B_{Y}$ such that $z_{1} = \lim_{n \rightarrow \iy} y_{n}$. Then $\|y-z_{1}\| \leq r + \sum_{n=1}^{\iy} \frac{3 \be}{2^{n+1}} = r + \frac{3 \be}{2}< r + \frac{\e}{2}$. 
					
					Now, let $\e^{\prime}>0$ and $x \in F$. Then there exists $n_{0} \in \mb{N}$
					such that $\frac{\de}{2^{n_{0}}} <\frac{\e^{\prime}}{3}$, $\|y_{n_{0}}-z_{1}\|<\frac{\e^{\prime}}{3}$ and $\sum_{k=1}^{n_{0}} \frac{\de}{2^{k+1}} < \frac{\de}{2} + \frac{\e^{\prime}}{3}$ and $x_{n_{0}} \in F_{n_{0}}$ such that $\|x-x_{n_{0}}\|< \frac{\de}{2^{n_{0}+2}}$. Therefore, 
					\begin{equation}
						\begin{split}
							\|x-z_{1}\| &\leq \|x-x_{n_{0}}\| + \|x_{n_{0}} - y_{n_{0}}\| + \|y_{n_{0}}-z_{1}\| \\ &< \frac{\de}{2^{n_{0}+2}} + r(y_{n_{0}}, F_{n_{0}}) + \frac{\e^{\prime}}{3} \\& < \frac{\e^{\prime}}{3}  + r^{\prime} + \sum_{k=1}^{n_{0}} \frac{\de}{2^{k+1}}  + \frac{\de}{2} + \frac{\e^{\prime}}{3}\\ &< r^{\prime} + \de+\e^\prime.\\  
						\end{split}
					\end{equation}
					It follows that $r(z_{1},F) \leq r^{\prime} + \de + \e^{\prime}.$ Since $\e^{\prime}$ is arbitrary, $r(z_{1},F) \leq r^{\prime} + \de=r^{\prime} + \de_{\frac{\e}{2^2}}$.   
					
					Thus, $z_{1} \in \emph{cent}_{B_{Y}}(F,\de_{\frac{\e}{2^2}})$ and hence, $d(z_{1},\emph{cent}_{B_{Y^{\perp \perp}}}(F))<\frac{\e}{2^2}$. Now, for each $m \in \mb{N}$, choose a finite $\frac{\de_{\e/2^3}}{2^{m+2}}$-net $G_{m} \ci F$ such that $G_{m} \ci G_{m+1}$. Therefore, there exists $\psi \in \emph{cent}_{B_{Y^{\perp \perp}}}(G_1,\frac{\de_{\e/2^3}}{2^2})$ such that $\|z_{1}-\psi\|<\frac{\e}{2^2}$. Then by applying similar arguments as above, there exists an element~$z_{2} \in B_{Y}$ such that $\|z_{1}-z_{2}\| < \frac{\e}{2^2}$ and $r(z_{2},F) \leq r^{\prime} + \de_{\frac{\e}{2^3}}$. 
					
					Again, proceeding inductively, we get a sequence~$\{z_{n}\} \ci B_{Y}$ such that $\|z_{n}-z_{n+1}\| < \frac{\e}{2^{n+1}}$ and $r(z_{n},F) \leq r^\prime + \de_{\frac{\e}{2^{n+1}}}$. Without loss of generality, we assume $\de_{\frac{\e}{2^{n+1}}} \rightarrow 0$. Clearly, $\{z_{n}\}$ is Cauchy in $B_{Y}$ and hence, let $z_{0} \in B_{Y}$ such that $z_{0} = \lim_{n \rightarrow \iy} z_{n}$. Let $x \in F$. Then $\|x-z_{0}\| = \lim_{n \rightarrow \iy} \|x-z_{n}\| \leq \lim_{n \rightarrow \iy}  r(z_{n},F) = r^{\prime}$. It follows that $r(z_{0}, F) \leq r^\prime$ and hence, $z_{0} \in \emph{cent}_{B_{Y}}(F)$. Also, $\|y-z_{0}\| \leq r  + \sum_{n =1}^{\iy} \frac{\e}{2^{n}} = r+\e$. Therefore, $d(y,\emph{cent}_{B_{Y}}(F)) \leq \|y-z_{0}\| \leq d(y,\emph{cent}_{B_{Y^{\perp \perp}}}(F)) + \e$. Since $\e$ is arbitrary, $d(y,\emph{cent}_{B_{Y}}(F)) \leq d(y,\emph{cent}_{B_{Y^{\perp \perp}}}(F))$. This proves the result.
				\end{proof}	
				
				The following result connects property-$(P_{1})$ of the closed unit ball of a subspace of a Banach space with its bidual. It is proved using an argument similar to that in the proof of \cite[Proposition~2.3]{C}.  
				\begin{prop}\label{prop2}
					Let $Y$ be a subspace of a Banach space~$X$. If $(X^{\ast \ast},B_{Y^{\perp \perp}},\mc{K}(X))$ has property-$(P_{1})$, then $(X,B_{Y},\mc{K}(X))$ has property-$(P_{1})$.
				\end{prop}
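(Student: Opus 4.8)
The plan is to deduce the proposition directly from Lemmas~\ref{lem2} and~\ref{lem4} via the sequential reformulation of property-$(P_1)$ recorded after Definition~\ref{Def0}. Recall that reformulation: the triplet $(X,B_Y,\mc{K}(X))$ has property-$(P_1)$ if and only if, for every $F \in \mc{K}(X)$ and every sequence $\{y_n\} \ci B_Y$ with $r(y_n,F) \ra \emph{rad}_{B_Y}(F)$, one has $d(y_n,\emph{cent}_{B_Y}(F)) \ra 0$. Accordingly, I would fix $F \in \mc{K}(X)$ together with such a sequence $\{y_n\} \ci B_Y$ and show that the convergence $d(y_n,\emph{cent}_{B_Y}(F)) \ra 0$ follows.

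First I would pass to the bidual. Under the canonical isometric embedding $X \hookrightarrow X^{\ast \ast}$ we have $B_Y \ci B_{Y^{\perp \perp}}$, so each $y_n$ may be regarded as an element of $B_{Y^{\perp \perp}}$, and since the embedding is an isometry the radius $r(y_n,F)$ is the same whether computed in $X$ or in $X^{\ast \ast}$ (as $F \ci X$). By Lemma~\ref{lem2}, $\emph{rad}_{B_{Y^{\perp \perp}}}(F) = \emph{rad}_{B_Y}(F)$; hence the hypothesis $r(y_n,F) \ra \emph{rad}_{B_Y}(F)$ is exactly the statement that $\{y_n\} \ci B_{Y^{\perp \perp}}$ satisfies $r(y_n,F) \ra \emph{rad}_{B_{Y^{\perp \perp}}}(F)$. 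Applying the sequential characterization in $X^{\ast \ast}$ to the triplet $(X^{\ast \ast},B_{Y^{\perp \perp}},\mc{K}(X))$, which has property-$(P_1)$ by assumption, I obtain $d(y_n,\emph{cent}_{B_{Y^{\perp \perp}}}(F)) \ra 0$.

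Next I would pull this back to $Y$. Each $y_n$ lies in $Y$, so Lemma~\ref{lem4} is applicable and yields $d(y_n,\emph{cent}_{B_{Y^{\perp \perp}}}(F)) = d(y_n,\emph{cent}_{B_Y}(F))$ for every $n$. Combining with the previous step gives $d(y_n,\emph{cent}_{B_Y}(F)) \ra 0$, which is precisely what the sequential characterization requires for $(X,B_Y,\mc{K}(X))$. I would also note in passing that $(B_Y,\mc{K}(X))$ has \trcp: for each $F \in \mc{K}(X)$ the proof of Lemma~\ref{lem4} constructs an element of $\emph{cent}_{B_Y}(F)$, so this set is non-empty and property-$(P_1)$ is well posed for the triplet.

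The argument is short once Lemmas~\ref{lem2} and~\ref{lem4} are in hand, so I do not expect any genuine obstacle in the proposition itself; the substantive work has already been carried out in Lemma~\ref{lem4}. The only points requiring care are the routine identifications under the canonical embedding: that it is isometric, so that $r(y_n,F)$ and the radii agree whether computed in $X$ or in $X^{\ast \ast}$ for the fixed compact set $F \ci X$, and that each $y_n \in B_Y \ci Y$, so that Lemma~\ref{lem4} indeed applies to it.
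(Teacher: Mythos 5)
Your proposal is correct and follows essentially the same route as the paper's proof: both use the sequential reformulation of property-$(P_1)$, identify the radii via Lemma~\ref{lem2}, apply the hypothesis in $X^{\ast\ast}$ to get $d(y_n,\textrm{cent}_{B_{Y^{\perp\perp}}}(F))\to 0$, and pull back with Lemma~\ref{lem4}, noting that non-emptiness of $\textrm{cent}_{B_Y}(F)$ comes from the proof of Lemma~\ref{lem4}. Your added remarks on the canonical isometric embedding only make explicit what the paper leaves implicit.
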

				\begin{proof}
					Let $F \in \mc{K}(X)$. It follows from the proof of Lemma~\ref{lem4} that $\emph{cent}_{B_{Y}}(F) \neq \es$.  Now, let $\{y_{n}\}$ be a sequence in $B_{Y}$ such that $r(y_{n},F) \rightarrow \emph{rad}_{B_{Y}}(F)$. By Lemma~\ref{lem2}, $\emph{rad}_{B_{Y}}(F) = \emph{rad}_{B_{Y^{\perp \perp}}}(F)$. Therefore, $d(y_{n},\emph{cent}_{B_{Y^{\perp \perp}}}(F)) \rightarrow 0$. Hence, by Lemma~\ref{lem4}, $d(y_{n},\emph{cent}_{B_{Y}}(F)) \rightarrow 0$. Therefore, $(X,B_{Y},\{F\})$ satisfies property $(P_{1})$.
				\end{proof}	
				
				The next result characterizes property-$(P_1)$ of the closed unit ball of a finite co-dimensional subspace of an $L_1$-predual space in terms of property-$(P_1)$ of the closed unit ball of its bidual.
				\begin{prop}\label{prop3}
					Let $Y$ be a finite co-dimensional subspace of an $L_1$-predual space~$X$. Then $(X,B_{Y},\mc{K}(X))$ has property-$(P_{1})$ if and only if $(X^{\ast \ast},B_{Y^{\perp \perp}},\mc{K}(X^{\ast \ast}))$ has property-$(P_{1})$.
				\end{prop}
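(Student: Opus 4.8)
The plan is to prove the two implications separately, with the reverse implication being essentially immediate and the forward one carrying all the content.

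For the reverse implication, I would observe that under the canonical isometric embedding $X\hookrightarrow X^{\ast\ast}$ every set in $\mc{K}(X)$ is carried to a set in $\mc{K}(X^{\ast\ast})$. Hence if $(X^{\ast\ast},B_{Y^{\perp\perp}},\mc{K}(X^{\ast\ast}))$ has property-$(P_1)$, then in particular $(X^{\ast\ast},B_{Y^{\perp\perp}},\mc{K}(X))$ has property-$(P_1)$, and Proposition~\ref{prop2} immediately yields property-$(P_1)$ for $(X,B_Y,\mc{K}(X))$.

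For the forward implication I would first record the structural facts that make $X^{\ast\ast}$ tractable. Since $X$ is an $L_1$-predual, $X^{\ast}$ is isometric to some $L_1(\mu)$, so $X^{\ast\ast}$ is isometric to $L_{\infty}(\mu)$ and hence to a space $C(S)$ for a suitable compact Hausdorff $S$ (cf. \cite{JL}, \cite{L}). Moreover, writing $Y=\bigcap_{i=1}^n ker(f_i)$ for a basis $\{f_1,\ldots,f_n\}$ of $Y^{\perp}$, the subspace $Y^{\perp\perp}=\bigcap_{i=1}^n ker(\widehat{f_i})$ is finite co-dimensional in $X^{\ast\ast}$ with $(Y^{\perp\perp})^{\perp}=\widehat{Y^{\perp}}$, the canonical image of $Y^{\perp}$ in $X^{\ast\ast\ast}$ (both spaces have dimension $n$ and one contains the other). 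Transporting everything along the isometry $X^{\ast\ast}\cong C(S)$, Theorem~\ref{thm2.1} applies to the finite co-dimensional subspace $Y^{\perp\perp}$ of $X^{\ast\ast}$, and its implication $(v)\Rightarrow(iv)$ reduces the goal to verifying that $(Y^{\perp\perp})^{\perp}=\widehat{Y^{\perp}}$ consists of SSD-points of $X^{\ast\ast\ast}$.

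The remaining task is to transfer the SSD property from $X^{\ast}$ up to $X^{\ast\ast\ast}$. From the hypothesis, restricting property-$(P_1)$ to singletons shows that $B_Y$ is strongly proximinal, so $Y$ is strongly proximinal in $X$; by the result of \cite{GV} quoted in the introduction, this gives that $Y^{\perp}$ consists of SSD-points of $X^{\ast}$. To pass to the bidual I would use that $X^{\ast}=L_1(\mu)$ is $L$-embedded, i.e. $\widehat{X^{\ast}}$ is an $L$-summand of $X^{\ast\ast\ast}$ (see \cite{HW}). Writing $X^{\ast\ast\ast}=\widehat{X^{\ast}}\oplus_1 N$, any $\Phi\in B_{X^{\ast\ast\ast}}$ decomposes as $\Phi=\widehat{g}+\nu$ with $\|g\|+\|\nu\|\le 1$, and for $f\in Y^{\perp}$ and $t>0$ one computes $\|\widehat{f}+t\Phi\|=\|\widehat{f+tg}\|+\|t\nu\|=\|f+tg\|+t\|\nu\|$. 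The difference quotient of the norm at $\widehat{f}$ therefore differs from that at $f$ only by the $t$-independent constant $\|\nu\|$, so the uniform (over $B_{X^{\ast}}$) one-sided differentiability of the norm of $X^{\ast}$ at $f$ forces the corresponding uniform one-sided differentiability over $B_{X^{\ast\ast\ast}}$ at $\widehat{f}$; that is, $\widehat{f}$ is an SSD-point of $X^{\ast\ast\ast}$. This establishes condition $(v)$ of Theorem~\ref{thm2.1} for $Y^{\perp\perp}$ and completes the forward implication.

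The main obstacle I anticipate is the bidual transfer of SSD-points in the last paragraph: one must verify carefully that the $L$-summand decomposition gives the displayed norm identity and that the cancellation of the $\|\nu\|$-term genuinely upgrades uniform one-sided differentiability over $B_{X^{\ast}}$ to uniform one-sided differentiability over the larger ball $B_{X^{\ast\ast\ast}}$. A secondary point requiring care is the clean identification of $X^{\ast\ast}$ with a $C(S)$ space, together with the matching of $Y^{\perp\perp}$, its annihilator $(Y^{\perp\perp})^{\perp}=\widehat{Y^{\perp}}$, and the relevant SSD-points under this isometry, so that Theorem~\ref{thm2.1} may be invoked legitimately.
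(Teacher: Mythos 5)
Your proposal is correct, and its overall architecture matches the paper's: the converse direction is exactly the paper's (note $\mc{K}(X)\ci\mc{K}(X^{\ast\ast})$ under the canonical embedding and invoke Proposition~\ref{prop2}), and the forward direction likewise passes through the representation $X^{\ast\ast}\cong C(S)$ and Theorem~\ref{thm2.1} applied to the finite co-dimensional subspace $Y^{\perp\perp}$. Where you genuinely diverge is in \emph{which} clause of Theorem~\ref{thm2.1} you verify and how. The paper deduces strong ball proximinality of $Y$ from the hypothesis and then cites an external transfer result (the argument of \cite[Proposition~2.4]{C}; compare the use of \cite[Theorem~3.10]{CT} in the proof of Theorem~\ref{thm5}) to push strong (ball) proximinality up to $Y^{\perp\perp}$ in $X^{\ast\ast}$, i.e.\ it verifies clause $(i)$/$(ii)$. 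You instead verify clause $(v)$ directly: you identify $(Y^{\perp\perp})^{\perp}=\widehat{Y^{\perp}}$ by the dimension count, note that $Y^{\perp}$ consists of SSD-points of $X^{\ast}$ by \cite{GV}, and lift the SSD property to $X^{\ast\ast\ast}$ using the $L$-embeddedness of $X^{\ast}\cong L_{1}(\mu)$; the computation $\|\widehat{f}+t\Phi\|=\|f+tg\|+t\|\nu\|$ for $\Phi=\widehat{g}+\nu$ with $\|g\|+\|\nu\|\le 1$ does show that the difference quotients at $\widehat{f}$ over $B_{X^{\ast\ast\ast}}$ differ from those at $f$ over a subset of $B_{X^{\ast}}$ by the $t$-independent constant $\|\nu\|$, so uniform one-sided differentiability transfers as you claim. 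This buys you a self-contained proof of the bidual transfer that the paper black-boxes, at the cost of having to set up the $L$-decomposition of $X^{\ast\ast\ast}$; the paper's citation route is shorter but opaque. The only small gloss worth making explicit is the step from strong ball proximinality of $Y$ (which is what restricting property-$(P_1)$ to singletons literally gives) to strong proximinality of $Y$; this is the result of \cite{BLR} quoted in the introduction, or alternatively follows from Proposition~\ref{prop0.2}$(iii)$.
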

				\begin{proof}
					Assume $(X,B_{Y},\mc{K}(X))$ has property-$(P_{1})$. Then, in particular, $Y$ is strongly ball proximinal in $X$. Now, using an argument similar to that in the proof of \cite[Proposition~2.4]{C} and Theorem~\ref{thm2.1}, it follows that $(X^{\ast \ast},B_{Y^{\perp \perp}},\mc{K}(X^{\ast \ast}))$ has property-$(P_{1})$.
					
					The converse of the result follows from Proposition~\ref{prop2}.
				\end{proof}
				
				For a Banach space $X$, the result in \cite[Corollary~2.5]{C} shows strong ball proximinality through the weak$^{\ast}$-dense subset~$X$ in $X^{\ast \ast}$. In the following result, we demonstrate the same for property-$(P_{1})$ by following a similar argument.
				\begin{cor}\label{cor1}
					Let $X$ be an $L_1$-predual space and $Z$ be a finite co-dimensional weak$^{\ast}$-closed subspace of $X^{\ast \ast}$. If $(X^{\ast \ast}, B_{Z},\mc{K}(X))$ has property-$(P_{1})$, then so does $(X^{\ast \ast}, B_{Z},\mc{K}(X^{\ast \ast}))$.
				\end{cor}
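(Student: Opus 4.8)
The plan is to recognise $Z$ as the bidual annihilator $Y^{\perp\perp}$ of an honest finite co-dimensional subspace $Y$ of $X$, and then to chain the two transfer results Proposition~\ref{prop2} and Proposition~\ref{prop3}. First I would establish the identification $Z = Y^{\perp\perp}$. Since $Z$ is weak$^{\ast}$-closed and of finite co-dimension, say $n$, in $X^{\ast\ast} = (X^{\ast})^{\ast}$, it is the intersection of the kernels of $n$ weak$^{\ast}$-continuous linear functionals on $X^{\ast\ast}$. Every weak$^{\ast}$-continuous functional on $(X^{\ast})^{\ast}$ is evaluation at a point of $X^{\ast}$; hence there exist linearly independent $\phi_{1},\ldots,\phi_{n} \in X^{\ast}$ with $Z = \{\Phi \in X^{\ast\ast} : \Phi(\phi_{i})=0,\ i=1,\ldots,n\}$. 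Putting $Y = \bigcap_{i=1}^{n} \ker(\phi_{i}) \subseteq X$, linear independence gives $Y^{\perp} = \mathrm{span}\{\phi_{1},\ldots,\phi_{n}\}$, whence $Y^{\perp\perp} = Z$ and $B_{Z} = B_{Y^{\perp\perp}}$. Thus $Y$ is a finite co-dimensional subspace of the $L_{1}$-predual space $X$.

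With this identification in hand, the hypothesis reads that $(X^{\ast\ast}, B_{Y^{\perp\perp}}, \mc{K}(X))$ has property-$(P_{1})$. Proposition~\ref{prop2} immediately yields that $(X, B_{Y}, \mc{K}(X))$ has property-$(P_{1})$. Since $X$ is an $L_{1}$-predual space and $Y$ is finite co-dimensional in $X$, Proposition~\ref{prop3} applies and furnishes the equivalence of $(X, B_{Y}, \mc{K}(X))$ having property-$(P_{1})$ with $(X^{\ast\ast}, B_{Y^{\perp\perp}}, \mc{K}(X^{\ast\ast}))$ having property-$(P_{1})$. Combining the two gives that $(X^{\ast\ast}, B_{Z}, \mc{K}(X^{\ast\ast}))$ has property-$(P_{1})$, which is exactly the desired conclusion.

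The only genuinely substantive step is the opening duality identification $Z = Y^{\perp\perp}$; everything after it is a formal composition of Propositions~\ref{prop2} and~\ref{prop3}. The weak$^{\ast}$-closedness of $Z$ is precisely what guarantees that $Z$ is cut out by functionals coming from $X^{\ast}$ rather than from all of $X^{\ast\ast\ast}$, and hence that it arises as the bidual annihilator of a subspace living inside $X$; without this hypothesis the reduction to a finite co-dimensional subspace of $X$ would break down and neither Proposition could be invoked.
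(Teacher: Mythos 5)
Your proposal is correct and follows essentially the same route as the paper: identify $Z$ as $Y^{\perp\perp}$ for $Y=\bigcap_{i=1}^{n}\ker(\phi_{i})\subseteq X$ using the weak$^{\ast}$-closedness of $Z$, then apply Proposition~\ref{prop2} followed by Proposition~\ref{prop3}. Your spelled-out justification of the duality identification is just a more detailed version of the paper's one-line appeal to a basis of $Z$'s pre-annihilator in $X^{\ast}$.
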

				\begin{proof}
					Since $Z$ is a finite co-dimensional weak$^{\ast}$-closed subspace of $X^{\ast \ast}$, there exists a basis $\{x^{\ast}_{1},\ldots,x^{\ast}_{n}\} \ci X^\ast$ for $Z^\perp$. Now, let $Y = \bigcap_{i=1}^{n} ker(x^{\ast}_{i})$. Then $Y^{\perp \perp} = Z$. Hence, by Proposition~\ref{prop2}, $(X,B_{Y},\mc{K}(X))$ has property-$(P_{1})$. Therefore, the result follows from Proposition~\ref{prop3}. 
				\end{proof}	
				
				We now prove the main result of this section.
				
				\begin{thm}\label{thm5}
					Let $Y$ be a finite co-dimensional subspace of an $L_1$-predual space~$X$. Then the following are equivalent:
					\blr
					\item $Y$ is strongly proximinal in $X$.
					\item $Y$ is strongly ball proximinal in $X$.
					\item $(X,Y,\mc{K}(X))$ has property-$(P_{1})$.
					\item $(X,B_{Y},\mc{K}(X))$ has property-$(P_{1})$.
					\item $Y^\perp \ci \{x^\ast \in X^\ast: x^\ast\mbox{ is an SSD-point of }X^\ast\}$.
					\el
				\end{thm}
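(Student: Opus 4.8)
The plan is to prove the five statements equivalent by closing the cycle $(ii) \Rightarrow (iv) \Rightarrow (iii) \Rightarrow (i) \Rightarrow (ii)$, after first recording that the block $(i) \Leftrightarrow (ii) \Leftrightarrow (v)$ is already available. Indeed, the equivalence of strong proximinality, strong ball proximinality and the SSD-condition $(v)$ for a finite co-dimensional subspace of an $L_{1}$-predual space is precisely \cite[Theorem~2.6]{C}, so I would quote that for $(i) \Leftrightarrow (ii) \Leftrightarrow (v)$. Two of the remaining links are immediate: $(iv) \Rightarrow (iii)$ is Proposition~\ref{prop0.2}$(iii)$, and $(iii) \Rightarrow (i)$ follows by specialising the defining inclusion of property-$(P_{1})$ for $(X,Y,\mc{K}(X))$ to the singleton sets $F=\{x\}$ (which lie in $\mc{K}(X)$): since $\textrm{cent}_{Y}(\{x\},\de)=P_{Y}(x,\de)$ and $\textrm{cent}_{Y}(\{x\})=P_{Y}(x)$, this yields strong proximinality of $Y$ at every $x \in X$, while the \ercp~built into property-$(P_{1})$ gives proximinality. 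Thus the whole theorem rests on the single new implication $(ii) \Rightarrow (iv)$.

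For that implication I would pass to the bidual. The key structural observation is that, because $X^{\ast}$ is isometric to an $L_{1}(\mu)$-space, $X^{\ast \ast}$ is isometric to $L_{\infty}(\mu)$ and hence to $C(S)$ for some compact Hausdorff space~$S$; this is exactly the point that lets the already-proved $C(S)$ result, Theorem~\ref{thm2.1}, be brought to bear on the bidual even though $X$ itself need not be a $C(S)$-space. Since $Y$ is finite co-dimensional in $X$, the subspace $Y^{\perp \perp}$ is finite co-dimensional and weak$^{\ast}$-closed in $X^{\ast \ast}$. Assuming $(ii)$, that is, that $Y$ is strongly ball proximinal in $X$, I would invoke \cite[Proposition~2.4]{C} to transfer this to the bidual and conclude that $Y^{\perp \perp}$ is strongly ball proximinal in $X^{\ast \ast}$. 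Applying Theorem~\ref{thm2.1} to the $C(S)$-space $X^{\ast \ast}$ and its finite co-dimensional subspace $Y^{\perp \perp}$ — specifically the implication $(ii) \Rightarrow (iv)$ there, which is legitimate because property-$(P_{1})$ is an isometric invariant — then yields that $(X^{\ast \ast}, B_{Y^{\perp \perp}}, \mc{K}(X^{\ast \ast}))$ has property-$(P_{1})$.

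It remains to descend from the bidual back to $X$. Because every compact subset of $X$ is a compact subset of $X^{\ast \ast}$, that is, $\mc{K}(X) \ci \mc{K}(X^{\ast \ast})$, the statement just obtained gives in particular that $(X^{\ast \ast}, B_{Y^{\perp \perp}}, \mc{K}(X))$ has property-$(P_{1})$; Proposition~\ref{prop2} then delivers property-$(P_{1})$ for $(X, B_{Y}, \mc{K}(X))$, which is $(iv)$. (Equivalently, this last descent is the converse half of Proposition~\ref{prop3}.) This closes the cycle and proves the theorem.

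The step I expect to be the main obstacle — and the one carrying the genuine new content — is the bidual transfer in the second paragraph: identifying $X^{\ast \ast}$ with a concrete $C(S)$-space, lifting strong ball proximinality of $Y$ to $Y^{\perp \perp}$, and verifying that the isometry carries Theorem~\ref{thm2.1} faithfully onto the bidual. One must also keep careful track of which class of compact sets, $\mc{K}(X)$ or $\mc{K}(X^{\ast \ast})$, each triplet refers to, so that the nesting $\mc{K}(X) \ci \mc{K}(X^{\ast \ast})$ is used in the right direction when applying Proposition~\ref{prop2}; reversing it would be fatal, since property-$(P_{1})$ for a larger class of sets is the stronger assertion.
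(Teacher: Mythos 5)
Your proposal is correct and takes essentially the same route as the paper: the cycle $(iv)\Rightarrow(iii)\Rightarrow(i)$ via Proposition~\ref{prop0.2} and specialisation to singletons, the block $(i)\Leftrightarrow(ii)\Leftrightarrow(v)$ from \cite[Theorem~2.6]{C}, and the key implication $(ii)\Rightarrow(iv)$ by identifying $X^{\ast\ast}$ with a $C(S)$-space, lifting (strong ball) proximinality to $Y^{\perp\perp}$, applying Theorem~\ref{thm2.1} there, and descending through Proposition~\ref{prop2}/Proposition~\ref{prop3}. The only difference is cosmetic — the paper lifts strong proximinality to the bidual via \cite[Theorem~3.10]{CT} rather than strong ball proximinality via \cite[Proposition~2.4]{C} — and both work since the two notions coincide for $Y^{\perp\perp}$ in $X^{\ast\ast}$ by Theorem~\ref{thm2.1}.
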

				\begin{proof}
					By \cite[Theorem~2.6]{C}, $(i) \Leftrightarrow (ii) \Leftrightarrow (v)$. Obviously, $(iii) \Rightarrow (i)$ and from Proposition~\ref{prop0.2}, $(iv) \Rightarrow (iii)$.
					
					Now, we prove that $(ii) \Rightarrow (iv)$. Assume $Y$ is strongly ball proximinal in $X$. Since $(ii) \Rightarrow (i)$, by \cite[Theorem~3.10]{CT}, $Y^{\perp \perp}$ is strongly proximinal in $X^{\ast \ast}$. Now, by \cite[Theorem~6.1]{JL}, $X^{\ast \ast}$ is isometric to $C(S)$, for some compact Hausdorff space~$S$. It follows from \cite[Theorem~2.1]{SD} and Theorem~\ref{thm2.1} that $(X^{\ast \ast},B_{Y^{\perp \perp}},\mc{K}(X^{\ast \ast}))$ has property-$(P_{1})$. Then, by Proposition~\ref{prop3}, $(X,B_{Y},\mc{K}(X))$ has property-$(P_{1})$.
				\end{proof}	
				
				We conclude this section by presenting characterizations for a strongly proximinal finite co-dimensional subspace of an $L_{1}$-predual space which are similar and in addition to those stated in \cite[Corollary~2.7]{C}.
				\begin{cor}\label{cor2}
					Let $Y$ be a finite co-dimensional subspace of an $L_1$-predual space~$X$. Then the following statements are equivalent:
					\blr
					\item $(X,Y,\mc{K}(X))$ has property-$(P_{1})$.
					\item $(X,B_{Y},\mc{K}(X))$ has property-$(P_{1})$.
					\item $Y$ is the intersection of finitely many hyperplanes $Y_{1},\ldots,Y_{n}$ such that for each $i=1,\ldots,n$, $(X,Y_{i},\mc{K}(X))$ has property-$(P_{1})$.  
					\item $Y$ is the intersection of finitely many hyperplanes $Y_{1},\ldots,Y_{n}$ such that for each $i=1,\ldots,n$, $(X,B_{Y_{i}},\mc{K}(X))$ has property-$(P_{1})$.  
					\el
				\end{cor}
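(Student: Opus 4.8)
The plan is to derive the whole equivalence from Theorem~\ref{thm5}, applied both to the subspace $Y$ itself and to each hyperplane appearing in a decomposition of $Y$, together with the characterization of strongly proximinal finite co-dimensional subspaces of an $L_{1}$-predual space recorded in \cite[Corollary~2.7]{C}. The first and cheapest observation is that the equivalence $(i) \Leftrightarrow (ii)$ is already contained in Theorem~\ref{thm5}: there $(X,Y,\mc{K}(X))$ having property-$(P_1)$ and $(X,B_{Y},\mc{K}(X))$ having property-$(P_1)$ are both shown equivalent to strong proximinality of $Y$ and to the inclusion $Y^{\perp} \ci \{x^\ast \in X^\ast : x^\ast \mbox{ is an SSD-point of } X^\ast\}$. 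I would record these equivalences at the outset, as they will be reused repeatedly.

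For $(iii) \Leftrightarrow (iv)$ I would exploit that a hyperplane is a $1$-co-dimensional, hence finite co-dimensional, subspace, so Theorem~\ref{thm5} applies verbatim to each $Y_{i}$. Thus for every hyperplane $Y_{i}$ the triplet $(X,Y_{i},\mc{K}(X))$ has property-$(P_1)$ if and only if $(X,B_{Y_{i}},\mc{K}(X))$ has property-$(P_1)$, both being equivalent to strong proximinality of $Y_{i}$. Since conditions $(iii)$ and $(iv)$ differ only in that the triplet $(X,Y_{i},\mc{K}(X))$ is replaced by $(X,B_{Y_{i}},\mc{K}(X))$ hyperplane by hyperplane, the equivalence $(iii) \Leftrightarrow (iv)$ is then immediate from this hyperplane-level statement.

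It remains to connect $(i)$ with $(iii)$. For $(i) \Rightarrow (iii)$: since $Y^{\perp}$ is finite dimensional, I would fix a basis $\{x^\ast_{1},\ldots,x^\ast_{n}\}$ of $Y^{\perp}$ and put $Y_{i}=ker(x^\ast_{i})$, so that $Y = \bigcap_{i=1}^{n} Y_{i}$ because $Y$ is closed and hence $Y = {}^{\perp}(Y^{\perp})$. By Theorem~\ref{thm5}, condition $(i)$ forces $Y^{\perp}$ into the SSD-points, so each $x^\ast_{i}$ is an SSD-point of $X^\ast$; consequently each $Y_{i}=ker(x^\ast_{i})$ is strongly proximinal, and applying Theorem~\ref{thm5} to the hyperplane $Y_{i}$ gives that $(X,Y_{i},\mc{K}(X))$ has property-$(P_1)$, which is exactly $(iii)$. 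For the converse $(iii) \Rightarrow (i)$, the same hyperplane-level form of Theorem~\ref{thm5} rewrites $(iii)$ as the assertion that $Y$ is a finite intersection of strongly proximinal hyperplanes, and \cite[Corollary~2.7]{C} then yields that $Y$ itself is strongly proximinal, whence $(i)$ follows once more from Theorem~\ref{thm5}.

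The only step that is not pure bookkeeping is the implication that a finite intersection of strongly proximinal hyperplanes is again strongly proximinal, that is, that the SSD-property propagates from a spanning set of $Y^{\perp}$ to all of $Y^{\perp}$; this fails in a general Banach space since SSD-points are not closed under linear combinations, and it is precisely the $L_{1}$-predual input supplied by \cite[Corollary~2.7]{C}. In the present write-up this therefore reduces to a citation, and I expect the entire argument to be a short assembly of Theorem~\ref{thm5} at the level of $Y$ and at the level of the individual hyperplanes $Y_{1},\ldots,Y_{n}$.
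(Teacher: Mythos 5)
Your argument is correct and is essentially the paper's proof: everything is reduced to Theorem~\ref{thm5} together with the known fact that, in an $L_1$-predual, strong proximinality of a finite co-dimensional subspace is equivalent to its being a finite intersection of strongly proximinal hyperplanes. The only cosmetic difference is that the paper cites \cite[Corollary~3.21]{CT} for that last ingredient where you invoke \cite[Corollary~2.7]{C}; both supply the same hyperplane-decomposition characterization, so the proofs coincide in substance.
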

				\begin{proof}
					It follows from Theorem~\ref{thm5} and \cite[Corollary~3.21]{CT} that $(i) \Leftrightarrow (iii)$ and $(ii) \Leftrightarrow (iv)$. Clearly, $(i) \Leftrightarrow (ii)$ follows from Theorem~\ref{thm5}.
				\end{proof}
				
				
				\section{An example of a subspace which satisfies $1\frac{1}{2}$-ball property and does not have \trcp}\label{sec5}
				A. L. Garkavi presented an example in \cite{GA} of a hyperplane in a non-reflexive Banach space which is proximinal but does not admit restricted Chebyshev center for a two-point set after a renorming. It can be observed that this hyperplane satisfies $1 \frac{1}{2}$-ball property in the renormed Banach space. This in turn shows that $1 \frac{1}{2}$-ball property and hence, strong proximinality is not a sufficient condition for \ercp. We now briefly describe Garkavi's example and prove that it satisfies $1 \frac{1}{2}$-ball property for the sake of completeness.
				
				\begin{ex}\label{ex0}
					Let $X$ be a non-reflexive Banach space and $Y=ker(x^\ast)$, where $x^{\ast} \in X^{\ast} \backslash \{0\}$, be a closed hyperplane in $X$. Then $Y$ is also non-reflexive and by James' theorem, there exists a linear functional $\Phi \in Y^{\ast}$ such that $\|\Phi\| =1$ and $\Phi$ does not attain its norm on $B_{Y}$. Define $D = \{y \in B_{Y}: \Phi(y) \geq \frac{3}{4}\}$ and then choose a $\ga >0$ and $y_{0} \in D$ such that $B[y_{0}, \ga] \cap Y$ is contained in the interior of the set $D$, w.r.t. $Y$. Let $\al = \inf\{\Phi(y):y \in B[y_{0},\ga] \cap Y\}$. Then $\frac{3}{4} \leq \al < 1$. Further, let us define $U = \{y \in B_{Y}: |\Phi(y)| \leq \al\}.$ Now, $U \cap B[y_{0},\ga] \cap Y = \es$ because the infimum defining $\al$ is not attained on $B[y_{0},\ga] \cap Y$.
					
					Let us fix $x_{0} \in X \backslash Y$ such that $x^{\ast}(x_0)=1$. We define $B_{\ga} = B[0,\ga] \cap Y$ and $V=  x_0 + B_{\ga}$. Let $B$ denote the closure of the set $conv(U \cup V \cup -V)$. Then $B$ is a closed bounded symmetric subset of $X$. Let $X^{\prime}$ denote the Banach space $X$, renormed to have $B$ as the closed unit ball. Let the renorming be denoted by $\|.\|_{B}$. Then the new norm $\|.\|_{B}$ on $X^{\prime}$ is equivalent to the old one on $X$. It is proved in \cite{GA} that $Y$ is proximinal in $X^{\prime}$ and $\textrm{cent}_{Y}(\{0,x_{0}+y_{0}\}) = \es$ in $X^{\prime}$.
				\end{ex}
				
				Let $Y$ be a subspace of a Banach space $X$. For an element $x \in X$ and $\e=0$, we note here that $P_Y(x,\e) = P_Y(x)$. For a non-empty subset $A$ of $X$ and $x \in X$, we denote $x+A = \{x+a : a \in A\}$. Let us now recall a characterisation of $1 \frac{1}{2}$-ball property provided in \cite{Godini}. The following result follows directly from \cite[Remark~6, p.~50 and Corollary~4, p.~52]{Godini}. 
				\begin{prop}\label{P1}
					Let $Y$ be a subspace of a Banach space $X$. Then $Y$ has $1 \frac{1}{2}$-ball property in $X$ if and only if $Y$ is proximinal in $X$ and for each $x \in X$ and $\e\geq 0$, $P_{Y}(x,\e) = \{y \in Y: d(y,P_Y(x)) \leq \e\}$.
				\end{prop}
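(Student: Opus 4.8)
The plan is to prove both implications directly from the definition of the $1\frac{1}{2}$-ball property, using that property as the bridge between approximate and genuine best approximations; the cited results of Godini yield the same equivalence once the notation is unwound, so I would invoke \cite{Godini} as a cross-check rather than reprove its statements. Throughout I write $d=d(x,Y)$ and recall that $P_Y(x,\e)=\{y\in Y:\|x-y\|\le d+\e\}$, while $P_Y(x)=P_Y(x,0)$ is closed.

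For the forward implication, assume $Y$ has $1\frac{1}{2}$-ball property. Proximinality is then immediate, since by \cite[Proposition~3.3]{SD2} the property forces strong proximinality, hence $P_Y(x)\neq\es$ for every $x$. Fixing $x$ and $\e\ge 0$, the inclusion $\{y\in Y:d(y,P_Y(x))\le\e\}\ci P_Y(x,\e)$ is a triangle-inequality computation that needs only $P_Y(x)\neq\es$: approximating $y$ by a best approximation to within $\e+\de$ gives $\|x-y\|\le d+\e+\de$ for all $\de>0$. The reverse inclusion is where the property enters. Given $y\in P_Y(x,\e)$, so $\|x-y\|\le d+\e$, and assuming the main case $d>0$, $\e>0$ (the cases $d=0$ and $\e=0$ being handled separately), I would apply the $1\frac{1}{2}$-ball property to the pair $y\in Y$, $x\in X$ with radii $r_1=\e+\de$ and $r_2=d$: indeed $\|x-y\|\le d+\e<r_1+r_2$ and $Y\cap B[x,d]\supseteq P_Y(x)\neq\es$, so the property produces $p\in Y$ with $\|x-p\|\le d$ (whence $p\in P_Y(x)$) and $\|y-p\|\le\e+\de$. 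Thus $d(y,P_Y(x))\le\e+\de$, and letting $\de\downarrow 0$ closes the inclusion.

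For the converse, assume $Y$ is proximinal and the set equality holds for all $x$ and $\e\ge 0$. Given $y_0\in Y$, $x\in X$ and $r_1,r_2>0$ with $\|x-y_0\|<r_1+r_2$ and $Y\cap B[x,r_2]\neq\es$, set $\e=r_2-d\ge 0$. The crucial step is to apply the hypothesis to $y_0$ itself: since $y_0\in P_Y\!\big(x,\|x-y_0\|-d\big)$, the set equality gives the distance bound $d(y_0,P_Y(x))\le\|x-y_0\|-d<r_1+\e$. I would then pick $q\in P_Y(x)$ with $L:=\|y_0-q\|<r_1+\e$ and slide along the segment $p_t=(1-t)y_0+tq\in Y$; choosing $t=0$ when $L\le\e$ and $t=1-\e/L$ when $L>\e$ yields $\|p_t-y_0\|\le r_1$ together with $d(p_t,P_Y(x))\le\|p_t-q\|=(1-t)L\le\e$. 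By the hypothesis the latter forces $p_t\in P_Y(x,\e)=Y\cap B[x,r_2]$, so $p_t\in Y\cap B[y_0,r_1]\cap B[x,r_2]$, which is the required conclusion.

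The main obstacle is that neither $d(x,Y)$ nor the distances $d(y,P_Y(x))$ need be attained, so every ``nearest point'' must be replaced by a $\de$-approximate one and the conclusions recovered by letting $\de\downarrow 0$; it is the convexity of $Y$ that lets the segment argument in the converse upgrade the strict bound $d(y_0,P_Y(x))<r_1+\e$ into an actual point of the triple intersection. I expect the only delicate bookkeeping to be matching the constants in these two limiting arguments and disposing cleanly of the degenerate cases $d=0$ and $\e=0$, where the asserted equality reduces to the closedness of $P_Y(x)$.
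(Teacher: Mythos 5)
Your proof is correct. Note that the paper itself gives no argument here: it simply asserts that the proposition ``follows directly from'' Godini's Remark~6 and Corollary~4, so your self-contained derivation is a genuinely different (and more informative) route. Both directions check out. In the forward direction, the application of the $1\frac{1}{2}$-ball property with radii $r_1=\e+\de$, $r_2=d(x,Y)$ and the passage $\de\downarrow 0$ correctly handles the fact that $d(y,P_Y(x))$ need not be attained, and the degenerate cases $d=0$ (where $x\in Y$ and $P_Y(x)=\{x\}$) and $\e=0$ (where the equality reduces to closedness of $P_Y(x)=Y\cap B[x,d]$) are indeed trivial as you claim. In the converse, the key observation that $y_0\in P_Y(x,\|x-y_0\|-d)$ combined with the hypothesized set equality yields the strict bound $d(y_0,P_Y(x))<r_1+\e$ with $\e=r_2-d$, and the segment argument $p_t=(1-t)y_0+tq$ with $t=1-\e/L$ (or $t=0$) correctly produces a point of $Y\cap B[y_0,r_1]\cap B[x,r_2]$, using convexity of $Y$ and a second application of the set equality to identify $P_Y(x,\e)$ with $Y\cap B[x,r_2]$. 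The only cosmetic remark is that proximinality in the forward direction can also be obtained directly from the ball property by an iterative construction, rather than via the cited strong proximinality result, but invoking \cite[Proposition~3.3]{SD2} as you do is perfectly legitimate since the paper itself quotes that result.
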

				
				The proof idea for the following result is similar to that used in \cite[Example~3.3]{BLR}.
				\begin{prop}\label{prop}
					Let $Y$ be a closed hyperplane in a non-reflexive Banach space $X$ and $X^{\prime}$ be the Banach space $X$ with the renorming $\|.\|_{B}$ as defined in the Example~\ref{ex0}. Then $Y$ satisfies $1 \frac{1}{2}$-ball property in $X^{\prime}$.
				\end{prop}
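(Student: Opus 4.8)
The plan is to verify the $1\frac{1}{2}$-ball property through the characterisation in Proposition~\ref{P1}. Since Garkavi already showed that $Y$ is proximinal in $X^{\prime}$ (Example~\ref{ex0}), it remains only to prove that for every $x \in X^{\prime}$ and every $\e \geq 0$,
\[ P_{Y}(x,\e) = \{y \in Y : d(y,P_{Y}(x)) \leq \e\}. \]
The inclusion $\supseteq$ is the routine direction: if $d(y,P_{Y}(x)) \leq \e$, then picking $y_{0} \in P_{Y}(x)$ that nearly realises this distance and using the triangle inequality gives $\|x-y\|_{B} \leq d(x,Y)+\e$, so $y \in P_{Y}(x,\e)$; this argument works for any proximinal subspace. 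The whole content is therefore the reverse inclusion.

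First I would pin down the geometry of $P_{Y}(x)$. Because $x^{\ast}$ vanishes on $U$ and equals $1$ on $V$ and $-1$ on $-V$, one gets $\sup_{b \in B} x^{\ast}(b)=1$, hence $\|x^{\ast}\|_{B^{\ast}}=1$ and $d(x,Y)=|x^{\ast}(x)|$ in $X^{\prime}$. By symmetry of $B$ it suffices to treat $c := x^{\ast}(x) >0$ (the case $c=0$ is trivial since then $x \in Y$, and $c<0$ follows by replacing $x$ with $-x$). A short closure argument shows that the face $\{b \in B : x^{\ast}(b)=1\}$ is exactly $V$: in any convex combination of points of $U \cup V \cup (-V)$ whose $x^{\ast}$-value tends to $1$, all the mass must concentrate on $V$, and $V$ is closed. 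Consequently $P_{Y}(x) = (x-cx_{0})+cB_{\gamma}$. Substituting $z=(x-cx_{0})-y$ and using that $cB_{\gamma}$ is symmetric, the reverse inclusion reduces to the following purely internal claim: if $z \in Y$ satisfies $\|cx_{0}+z\|_{B} \leq c+\e$, then $d(z,cB_{\gamma}) \leq \e$.

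The heart of the matter is a decomposition argument for this internal claim. Since $\frac{cx_{0}+z}{c+\e} \in B = \overline{conv(U \cup V \cup (-V))}$, I would write it as a limit of convex combinations $L_{n}\tilde{u}_{n}+M_{n}\tilde{v}_{n}+N_{n}\tilde{w}_{n}$ with $\tilde{u}_{n}\in U$, $\tilde{v}_{n}\in V$, $\tilde{w}_{n}\in -V$ (each generating set is convex, so single representatives suffice). Writing $\tilde{v}_{n}=x_{0}+a_{n}$ and $\tilde{w}_{n}=-x_{0}+b_{n}$ with $a_{n},b_{n}\in B_{\gamma}$ and splitting off the $x_{0}$-direction yields $z=(c+\e)\lim_{n}(L_{n}\tilde{u}_{n}+M_{n}a_{n}+N_{n}b_{n})$ together with $M_{n}-N_{n}\to \frac{c}{c+\e}$. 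Passing to a subsequence so that $L_{n},M_{n},N_{n}$ converge, the constraints $L_{n}+M_{n}+N_{n}=1$ and $N_{n}\geq 0$ force $\lim_{n}L_{n}\leq \frac{\e}{c+\e}$. The candidate nearest points are $\hat{w}_{n}=c\,\frac{M_{n}a_{n}+N_{n}b_{n}}{M_{n}+N_{n}}\in cB_{\gamma}$, and a direct estimate, bounding every term in $\|\cdot\|_{B}$ via $B_{\gamma}\ci B$ and $U\ci B$, gives $\limsup_{n}\|z-\hat{w}_{n}\|_{B}\leq (c+\e)\lim_{n}L_{n}+\big(\e-(c+\e)\lim_{n}L_{n}\big)=\e$. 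Hence $d(z,cB_{\gamma})\leq\e$, which completes the reverse inclusion. (In fact this computation yields the cleaner identity $\|cx_{0}+z\|_{B}=c+d(z,cB_{\gamma})$.)

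I expect the last paragraph to be the main obstacle. Because $B$ is only the \emph{closed} convex hull and $B_{Y}$ is non-compact, one cannot pass to the limit inside the decomposition termwise, and the error must be controlled in the renorm $\|\cdot\|_{B}$ rather than in the original norm. The delicate point is the bookkeeping that makes the two error contributions—the leftover $U$-mass $(c+\e)\lim_{n}L_{n}$ and the excess incurred by rescaling the $B_{\gamma}$-part onto $cB_{\gamma}$—cancel to give exactly $\e$; it is precisely the bound $\lim_{n}L_{n}\leq \e/(c+\e)$, forced by $N_{n}\geq 0$, that closes this gap. Once the internal claim is established, Proposition~\ref{P1} delivers the $1\frac{1}{2}$-ball property for $Y$ in $X^{\prime}$.
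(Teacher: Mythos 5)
Your proposal is correct and follows essentially the same route as the paper: both verify Godini's characterization (Proposition~\ref{P1}), reduce to the distinguished direction $x_0$, decompose the normalized vector as a limit of convex combinations drawn from $U \cup V \cup (-V)$, extract the constraint on the weights by applying $x^{\ast}$, and take the $B_{\gamma}$-components of the decomposition as candidate nearest points, with the two error terms summing to exactly $\e$. The only differences are cosmetic: the paper translates and scales to $x_0$ at the outset and uses the single component $u_n^{\prime}$ as the candidate, whereas you keep a general $c = x^{\ast}(x)$ and average the two $B_{\gamma}$-components; you also spell out the identification $P_Y(x_0) = B_{\gamma}$ via the face $\{b \in B : x^{\ast}(b)=1\} = V$, which the paper simply quotes from Garkavi.
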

				\begin{proof}
					Clearly, if $x \in X^\prime$, then there exists $\la \in \mb{R}$ and $y \in Y$ such that $x = y + \la x_0$. Also, clearly, $P_{Y}(y+\la x_0) = y+ \la P_{Y}(x_0)$ and $P_{Y}\left(y+\la x_0, \de\right) = y+ \la P_{Y}(x_0, \frac{\de}{|\la|})$, for $\de >0$ and $\la \neq 0$. Therefore, applying Proposition~\ref{P1} and by translation, it suffices to prove that for each $\e\geq 0$, $P_{Y}(x_0,\e) = \{y \in Y: d(y,P_Y(x_0)) \leq \e\}$. Now, $d(x_0, Y) = 1$ and $P_{Y}(x_0) = B_{\ga}$. Let $\e\geq0$. By \cite[Remark~5, p.~50]{Godini}, we have $\{y \in Y: d(y,P_Y(x_0)) \leq \e\} \ci P_{Y}(x_0, \e)$. For $\e=0$, it is trivial to see that $P_Y(x_0) \ci \{y \in Y: d(y,P_{Y}(x_0))=0\}$. Thus, it remains to show that for each $\e>0$, $P_{Y}(x_0, \e) \ci \{y \in Y: d(y,P_Y(x_0)) \leq \e\}$, or in other words, we prove that if $\e>0$ and $y \in Y$ is such that $\|y-x_0\|_{B} \leq 1 + \e$, then we have $d(y, B_{\ga})\leq\e$.
					
					Let $y \in Y$ such that $\eta = \|y-x_0\|_{B} \leq 1 + \e$. Without loss of generality, assume $\eta>1$. Therefore, $\frac{y-x_0}{\eta} \in B$. Thus, there exists sequences $\{\al_n\}$, $\{\be_n\}$, $\{\nu_n\}$ $\ci [0,1]$ such that for each $n$, $\al_n + \be_n + \nu_n = 1$ and sequences $\{u_{n}\}$, $\{u_{n}^{\prime}\}$ $\ci B_{\ga}$; $\{y_{n}\} \ci U$ such that \[\frac{y-x_0}{\eta}= \lim_{n \rightarrow \iy} [\al_n u_n + \be_n u_n^{\prime} + \nu_n y_n + (\al_n -\be_n) x_0].\] Without loss of generality, assume $\al_n \rightarrow \al$, $\be_n \rightarrow \be$ and $\nu_n \rightarrow \nu$, where $\al,\be,\nu \in [0,1]$ and $\al+\be +\nu =1$. Therefore, it follows that $\be - \al = \frac{1}{\eta}$ and $y = \lim_{n \rightarrow \iy} \eta[\al u_n + \be u_n^{\prime} + \nu y_n]$.
					Now, $\frac{1}{\eta} \leq \al + \frac{1}{\eta} = \be \leq 1$ and for each $n$, $\|u_n\|_{B}$, $\|u_{n}^{\prime}\|_{B}$, $\|y_n\|_{B} \leq 1$. Therefore, 
					\begin{equation}
						\begin{split}
							d(y,B_{\ga}) &\leq \inf_{n} \|y - u_{n}^{\prime}\|_{B}\\ &\leq \liminf_{n} \|\eta [\al u_{n} + \be u_{n}^{\prime} + \nu y_{n}] -u_{n}^{\prime}\|_{B}\\ &=\liminf_{n} \| \eta \al u_{n} + (\eta \be-1)u_{n}^{\prime} + \eta \nu y_{n}\|_{B} \\ &\leq \eta \al + (\eta \be-1) + \eta \nu= \eta-1 \leq \e.\\
						\end{split}
					\end{equation}	
				\end{proof}	
				\bibliographystyle{plain}
				\bibliography{Bibliography_property(P_1)}
			
			\end{document}